\documentclass[11pt]{amsart}
\usepackage{amssymb, amsthm, amsmath, mathrsfs, graphicx}
\usepackage[all,cmtip]{xy}
\numberwithin{equation}{section}
\newtheorem{theorem}{Theorem}[section]
\newtheorem{lemma}[theorem]{Lemma}
\newtheorem{corollary}[theorem]{Corollary}
\newtheorem{proposition}{Proposition}[section]

\theoremstyle{definition}
\newtheorem{definition}{Definition}

\theoremstyle{remark}
\newtheorem{remark}{Remark}

\newcommand{ \rn }[1]{\mathbb{R}^{#1}}

\title[An Inverse Source Problem with Partial Data]{An Inverse Source Problem in Radiative Transfer with Partial Data}
\date{October 26, 2011}
\author{Mark Hubenthal}
\address{Department of Mathematics, University of Washington, Seattle, WA 98195}
\email{hubenjm@math.washington.edu}
\subjclass[2000]{35R30, 35Q60, 35S05}
\keywords{Optical molecular imaging, radiative transfer equation, inverse problems, partial data}

\begin{document}
\begin{abstract}The inverse source problem for the radiative transfer equation is considered, with partial data. Here it is shown that under certain smoothness conditions on the scattering and absorption coefficients, one can recover sources supported in a certain subset of the domain, which we call the visible set. Furthermore, it is shown for an open dense set of $C^{\infty}$ absorption and scattering coefficients that one can recover the part of the wave front set of the source that is supported in the microlocally visible set, modulo a function in the Sobolev space $H^{k}$ for $k$ arbitrarily large. This is an extension to the full data case, where the complete recovery of an arbitrary source has been shown.\end{abstract}
\maketitle
\section{Introduction}
We consider a problem relevant to optical molecular imaging (OMI), which is a fast-growing research area. In this application, biochemical markers can be used to detect the presence of specific molecules or genes, and suitably designed markers could potentially identify diseases before phenotypical symptoms even appear. The markers are typically light-emitting molecules, such as fluorophores or luminophores. In contrast to Single Positron Emission Computed Tomography (SPECT),  Positron Emission Tomography (PET), or Magnetic Resonance Imaging (MRI), optical markers emit low-energy near-infrared photons that are relatively harmless to human tissue. Further specifics can be found in the bioengineering literature such as \cite{be1,be2,be3}. 

The inverse problem we consider consists of reconstructing the spatial distribution of a radiation source from measurements of photon intensities at the boundary of the medium in specific outgoing directions. In many applications, the propagation of photons emitted can be modeled as inverse source problems of steady-state radiative transfer equations. Once we know the optical properties of the underlying medium, the problem of determining the source is feasible. It is shown in \cite{inversesource} that under mild assumptions on the scattering and absorption parameters of the medium this is possible. However, in the partial data case, which will be made more clear shortly, one can only hope to recover information about the singularities of the source. In particular, we seek to recover information about the wavefront set of the source function. We now describe more precisely the mathematical problem.

We assume $\Omega$ to be a bounded domain in $\rn{n}$ with smooth boundary and outer unit normal vector $\nu(x)$. As in \cite{inversesource}, we also assume that $\Omega$ is strictly convex. The authors of that paper note that this is not an essential assumption, since for the problem considered one can always push the boundary away and make it strictly convex without losing generality. Moreover, we assume that the data is given on the boundary of a larger domain $\Omega_{1} \Supset \Omega$. We remark that this condition is not needed for existence and uniqueness of the solution to the forward problem, but it is required for the stability result (\ref{stability}), which is adapted from the proof for the complete data case.

Consider the radiative transfer equation
\begin{align}
\theta \cdot \nabla_{x}u(x,\theta) + \sigma(x,\theta)u(x,\theta) - \int_{\mathbb{S}^{n-1}}k(x,\theta,\theta')u(x,\theta')\,d\theta' & = f(x), \notag\\
\quad u|_{\partial_{-}S\Omega} & = 0, \label{transport}
\end{align}
where the absorption $\sigma$ and the collision kernel $k$ are functions with regularity specified later, the solution $u(x,\theta)$ gives the intensity of photons at $x$ moving in the direction $\theta$, and $\partial_{\pm}S\Omega$ is the set of points $(x,\theta) \in \partial \Omega \times \mathbb{S}^{n-1}$ such that $\pm \nu(x) \cdot \theta > 0$. That is, $\partial_{\pm}S\Omega$ is the set of points $(x,\theta) \in \partial \Omega \times \mathbb{S}^{n-1}$ such that $\theta$ is pointing outward or inward, respectively. The source term $f$ will be assumed to depend on $x$ only for our purposes. We also remark that equation \ref{transport} is only applicable at a single frequency, as the parameters $\sigma$ and $k$ typically depend highly on frequency. In particular, for high energy photons there is a coupling between energy and angle, whereas for photons with low energy scattering is not accompanied by an energy change.

In the case of full data, we have boundary measurements
\begin{equation}
Xf(x,\theta) = u\vert_{\partial_{+}S\Omega}.
\end{equation}
In \cite{inversesource}, it is shown that for an open, dense set of absorption and scattering coefficients $(\sigma, k) \in C^{2}(\overline{\Omega} \times \mathbb{S}^{n-1}) \times C^{2}(\overline{\Omega} \times \mathbb{S}^{n-1} \times \mathbb{S}^{n-1})$, one can recover $f \in L^{2}(\Omega)$ uniquely from boundary measurements $Xf$ on all of $\partial_{+}S\Omega$. To set up the case of partial data, first let $V \subset \partial_{+}S\Omega$ be open and let $\widetilde{V} \Subset V$. Let $\chi_{V}\in C_{0}^{\infty}(\partial_{+}S\Omega)$ be a smooth cutoff function such that $\chi_{V}(x,\theta) \equiv 1$ for $(x,\theta) \in \widetilde{V}$ and $\chi_{V}(x,\theta) \equiv 0$ for $(x,\theta) \notin V$. The boundary measurements are then given by 
\begin{equation}
X_{V}f(x,\theta) = \chi_{V}(x,\theta) u\vert_{\partial_{+}S\Omega}.
\end{equation}
To make notation a bit simpler, if $V = \partial_{+}S\Omega$ (complete data) we will just write $X$, since in this case $X_{V} = X$.

In section \ref{direct} we will review the direct problem and some relevant results  of use in the partial data case. We also establish some results about singular integrals that will be needed to prove the main theorem.

In section \ref{inverse} we consider the inverse problem with partial data, which consists of determining the source term $f$ from measuring $X_{V}f$. We also compute the normal operator $X_{V}^{*}X_{V}$ when the scattering coefficient $k= 0$. Note that when $\sigma = k = 0$, the operators $X$ and $X_{V}$ are the standard X-ray transforms with full and limited data, respectively. When $k=0$, then $X_{V}$ is a weighted X-ray transform.

Following, in section \ref{partialinjective} we establish an injectivity result for $f \in L^{2}(\Omega)$ supported in the \textit{visible set} assuming analytic $\sigma$.

Finally, in section \ref{partialwavefront} we prove the main theorem that one can recover the visible singularities of $f$ with respect to the chosen set $V$ when $\sigma \in C^{\infty}(\Omega \times \mathbb{S}^{n-1})$. Both results are based on the microlocal approach used in \cite{xraygeneric}. Results needed pertaining to singular integral operators are located in appendix \ref{singularresults}.

\section{Statement of Main Results}
When dealing with the inverse problem, which we will describe in detail in section \ref{inverse}, we need to take a larger domain (strictly convex for convenience) that compactly contains $\Omega$. That is, fix a strictly convex open set $\Omega_{1}$ with smooth boundary such that $\Omega_{1} \Supset \Omega$. The strict convexity of $\Omega_{1}$ ensures that the functions $\tau_{\pm}(x,\theta)$ are smooth, where $\tau_{\pm}(x,\theta)$ is the travel time from $x \in \Omega_{1}$ to $\partial \Omega_{1}$ in the direction $\pm \theta$. In other words
\begin{equation}
(x + \tau_{\pm}(x,\theta)\theta, \theta) \in \partial_{\pm}S\Omega_{1}.
\end{equation}
We will extend $\sigma$ and $k$ to functions on $\Omega_{1}$ with the same regularity. We choose and fix this extension as a continuous operator in those spaces. Now define $X_{1}:L^{2}(\Omega_{1}) \to L^{2}(\partial_{+}S\Omega_{1})$ in the same way as for $X$. From this we can look at the restriction of $X_{1}$ applied to functions $f$ supported in $\overline{\Omega}$ by first extending such $f$ as zero on $\Omega_{1} \setminus \Omega$. Essentially, we are moving the observation surface outward a bit and taking measurements on $\partial \Omega_{1}$. When dealing with the inverse problem, we will usually abuse notation and write $X$ instead of $X_{1}$, with the understanding that we've already extended the domain $\Omega$ to $\Omega_{1}$.


It is proven in (Theorem 2, \cite{inversesource}) that the operator $X$ is injective for such an open, dense set of coefficients $(\sigma, k)$ as in (Theorem 1, \cite{inversesource}) with $f \in L^{2}(\Omega)$, and a stability result is obtained for the normal operator $X^{*}X:L^{2}(\Omega) \to L^{2}(\Omega)$. Here the adjoint $X^{*}:L^{2}(\partial_{+}S\Omega, d\Sigma) \to L^{2}(\Omega \times \mathbb{S}^{n-1})$ is defined with respect to the measure $d\Sigma$, which we define shortly. More specifically, for an open and dense set of pairs $(\sigma, k) \in C^{2}(\overline{\Omega} \times \mathbb{S}^{n-1}) \times C^{2}(\overline{\Omega}_{x} \times \mathbb{S}_{\theta'}^{n-1}; C^{n+1}(\mathbb{S}_{\theta}^{n-1}))$, including a neighborhood of $(0,0)$, we have that the conclusions of (Theorem 1, \cite{inversesource}) hold in $\Omega_{1}$, that $X_{1}$ is injective on $L^{2}(\Omega)$, and the stability estimate $\|f\|_{L^{2}(\Omega)} \leq C\|X_{1}^{*}X_{1}f \|_{H^{1}(\Omega_{1})}$ for a constant $C > 0$ locally uniform in $(\sigma, k)$.

Before stating the main results of this paper, we need to define the set of points such that $X_{V}$ is injective when restricted to sources supported there. This set will clearly depend on $V$. We also denote by $l_{x,\theta}(t)$ as the line starting at $x$ with direction $\theta$.
\begin{definition}We define the \textit{visible set} $\mathcal{M} \subset \Omega$ by
 \begin{align}
\mathcal{M} & = \{x \in \Omega \, | \, \forall \theta \in \mathbb{S}^{n-1}\, \exists (z, \theta^{\perp}) \in V \textrm{with }\theta^{\perp} \cdot \theta = 0 \notag \\
& \quad \textrm{ such that } l_{z,\theta^{\perp}} \textrm{ intersects } x\} \label{visibleset}.
\end{align}
\end{definition}
It is relatively straightforward to show since $V$ is open, $\mathcal{M}$ is open as well. The proof is left to the reader.
\begin{figure}
\centering
\includegraphics[width=0.5\textwidth]{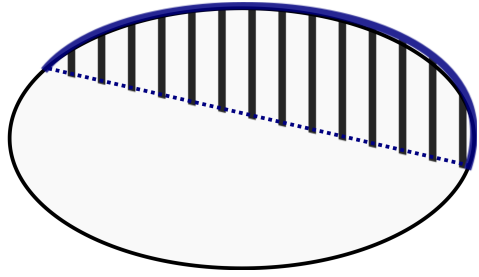}
\caption{The visible set is shaded here in the case where $V$ consists of a connected open subset of the boundary together with all outgoing directions.}
\end{figure}
Now we can state the first main result, which is an injectivity condition adapted from results in \cite{xraygeneric}. 
\begin{theorem}Let $V \in \partial_{+}S\Omega_{1}$ be an open set and let $\mathcal{M}$ be as defined above. Let $W \Subset \mathcal{M}$. Then there exists an open and dense set of pairs
\begin{equation}
(\sigma, k) \in C^{2}(\overline{\Omega} \times \mathbb{S}^{n-1}) \times C^{2}(\overline{\Omega}_{x} \times \mathbb{S}_{\theta'}^{n-1}; C^{n+1}(\mathbb{S}_{\theta}^{n-1})), \label{regularity}
\end{equation}
including a neighborhood of $(0,0)$, such that for each $(\sigma, k)$ in that set, the direct problem (\ref{transport}) has a unique solution $u \in L^{2}(\Omega_{1} \times \mathbb{S}^{n-1})$ for any $f \in L^{2}(\Omega \times \mathbb{S}^{n-1})$, $X_{V}$ extends to a bounded operator from $L^{2}(\Omega_{1} \times \mathbb{S}^{n-1})$ to $L^{2}(\partial_{+}S\Omega_{1}, d\Sigma)$, and
\begin{enumerate}
\item the map $X_{V}$ is injective on $L^{2}(W)$,
\item the following stability estimate holds
\begin{equation}
\|f\|_{L^{2}(\Omega)} \leq C \|X_{V}^{*}X_{V}f \|_{H^{1}(\Omega_{1})}, \quad \forall f \in L^{2}(W), \label{stability}
\end{equation}
\end{enumerate}
with a constant $C > 0$ locally uniform in $(\sigma, k)$. \label{injectivetheorem}
\end{theorem}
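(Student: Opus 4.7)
The plan is to follow the normal-operator strategy of \cite{inversesource}, adapted to the partial data setting by exploiting the geometric meaning of the visible set $\mathcal{M}$. I begin with the model case $(\sigma, k) = (0, 0)$, where $X_V$ is a smoothly cut-off Euclidean X-ray transform. From the computation of $X_V^{*}X_V$ carried out in section \ref{inverse}, its Schwartz kernel is a weighted Riesz-type singular integral, essentially $c_n |x-y|^{-(n-1)}$ times a smooth factor recording $\chi_V$ evaluated at the boundary endpoints cut out by the line through $x$ and $y$, so $X_V^{*}X_V$ is a classical pseudodifferential operator of order $-1$ on $\Omega_1$. A standard polar-coordinates calculation identifies its principal symbol at $(x,\xi) \in T^{*}\Omega_1 \setminus 0$ as a positive constant times $|\xi|^{-1}$ multiplied by the integral over $\theta \in \mathbb{S}^{n-1}$ with $\theta \perp \xi$ of $\chi_V$ at the exit point of $l_{x,\theta}$. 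The definition (\ref{visibleset}) of $\mathcal{M}$ is exactly the condition that makes this integrand strictly positive at some admissible $\theta$ for every $x \in \mathcal{M}$ and every $\xi \neq 0$, so $X_V^{*}X_V$ is uniformly elliptic over any $W \Subset \mathcal{M}$.

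Next I construct a properly supported left parametrix $Q$ of order $+1$, microlocalized near $T^{*}\overline{W}$, satisfying $Q X_V^{*}X_V = \mathrm{Id} + K$ on functions supported in $W$, with $K$ smoothing. Taking $L^2$-norms and using the mapping property $Q : H^1(\Omega_1) \to L^2(\Omega_1)$ yields
\[ \|f\|_{L^2(\Omega)} \leq C_1 \|X_V^{*}X_V f\|_{H^1(\Omega_1)} + C_2 \|K f\|_{L^2(\Omega_1)}, \quad f \in L^2(W). \]
Since $K$ is compact on $L^2(W)$, the standard Fredholm argument absorbs the second term into the first as soon as $X_V$ is injective on $L^2(W)$: otherwise a normalized minimizing sequence would produce a nontrivial element of $\ker X_V \cap L^2(W)$.

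To close the loop, injectivity at $(\sigma,k)=(0,0)$ holds by classical results on the limited-data Euclidean X-ray transform on sources compactly supported in $W \Subset \mathcal{M}$; alternatively, it follows from the parametrix identity together with analytic microlocal unique continuation, or as a special case of the analytic-coefficient injectivity proved in section \ref{partialinjective} applied with $\sigma \equiv 0$. A perturbation argument then extends both (i) and (ii) to a full $C^2$-neighborhood of $(0,0)$, since the parametrix construction depends continuously on the coefficients and invertibility is stable under small perturbations of a Fredholm operator of index zero. To promote this to an open and dense set, one mimics the analytic-Fredholm argument of \cite{inversesource}: viewing $X_V^{*}X_V$ as an analytic family of Fredholm operators of index zero in the coefficients, the set where the operator fails to be injective on $L^2(W)$ is a nowhere dense analytic subvariety of parameter space, and its complement contains the open $C^2$-neighborhood of $(0,0)$ already produced.

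The principal obstacle is that $X_V^{*}X_V$ is never globally elliptic: its principal symbol vanishes at covectors $(x,\xi)$ admitting no line in $V$ through $x$ perpendicular to $\xi$. Consequently the parametrix only inverts $X_V^{*}X_V$ microlocally, and the compact error can only be controlled if the source space is held fixed as $L^2(W)$ with $W \Subset \mathcal{M}$. The cushion $\widetilde V \Subset V$ built into $\chi_V$ and the cushion $W \Subset \mathcal{M}$ are precisely what enable the construction of a properly supported parametrix whose error is smoothing on $L^2(W)$; verifying that this parametrix identity respects supports so that the Fredholm absorption step actually applies to every $f \in L^2(W)$ is the technical heart of the argument.
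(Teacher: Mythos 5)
Your core strategy --- computing the principal symbol of the normal operator, observing that the definition of $\mathcal{M}$ is precisely the ellipticity condition over $W \Subset \mathcal{M}$, building a parametrix $Q: H^{1}(\Omega_{1}) \to L^{2}(\Omega)$, and absorbing the compact error by the standard Fredholm argument once injectivity on $L^{2}(W)$ is known --- is the same as the paper's, which writes $QX_{V}^{*}X_{V}f = f + K_{2}f$ with $K_{2}$ compact and then invokes Theorem \ref{Iinjective}. The genuine gap is in how you pass from the model case to an open \emph{and dense} set of pairs (\ref{regularity}). Anchoring everything at $(\sigma,k)=(0,0)$ and perturbing only yields an open neighborhood of the origin; density requires approximating an \emph{arbitrary} pair $(\sigma_{0},k_{0})$, with $k_{0}$ not small, by a pair where injectivity holds. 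Your appeal to ``a nowhere dense analytic subvariety of parameter space'' does not accomplish this: the analytic Fredholm alternative (invertible off a discrete set versus invertible nowhere) is a one-complex-parameter statement and, more importantly, it requires exhibiting on each curve of the family a point where invertibility is already known. Producing such points near an arbitrary $(\sigma_{0},k_{0})$ is exactly what forces the paper to prove Theorem \ref{Iinjective} for \emph{all} real analytic $\sigma$ (via the analytic microlocal result of \cite{xraygeneric} applied to the regular family $\Gamma_{\mathcal{M}}$), not merely for $\sigma \equiv 0$ as in your aside; one then approximates $\sigma_{0}$ by analytic $\sigma$ and runs the analytic Fredholm theorem in the single scalar parameter $\lambda$ along $\lambda \mapsto \lambda k_{0}$, as in Theorem 2 of \cite{inversesource}.

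The second missing ingredient is the decomposition $X_{V}^{*}X_{V} = I_{\sigma,V}^{*}I_{\sigma,V} + \mathcal{L}_{V}$ together with Lemma \ref{compact}, which shows that $\partial_{x}\mathcal{L}_{V}$ is compact on $L^{2}(\Omega_{1})$, i.e.\ that the scattering contribution is a relatively compact perturbation of the elliptic part in the topology relevant to the $H^{1}$ estimate (\ref{stability}). Without this you cannot conclude that $Q\mathcal{L}_{V}$ is compact at the anchor points needed for density, where $k$ has no smallness; your proposal never addresses the scattering term except through perturbation off $(0,0)$. Your identification of the support bookkeeping for the parametrix as the technical heart is reasonable but secondary: the heart of the paper's proof is Lemma \ref{compact} plus Theorem \ref{Iinjective}, and both are effectively absent from your outline.
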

\begin{remark}
The proof of uniqueness and stability for the direct problem (\ref{transport}) as stated in Theorem \ref{injectivetheorem} is essentially the same as the one contained in \cite{inversesource}, so we will focus on the subtle differences. Furthermore, the proof that $X_{V} = \chi_{V}X$ extends to a bounded operator from $L^{2}(\Omega_{1})$ to $L^{2}(\partial_{+}S\Omega_{1}, d\Sigma)$ follows from the proof in \cite{inversesource} that $X$ is bounded and the fact that multiplication by $\chi_{V}$ is bounded on $L^{2}(\partial_{+} S\Omega_{1}, d\Sigma)$.
\end{remark}

For sources $f$ with more general supports, we hope to be able to recover certain covectors in the wavefront set of $f$. Those covectors $(x,\xi) \in T^{*}\Omega$ that can be detected will depend on $V$ in the following way:
\begin{definition}The \textit{microlocally visible set} corresponding to partial measurements on $\partial_{+}S\Omega_{1}$ is given by
 \begin{equation}
\mathcal{M}' := \{ (x,\xi) \in T^{*}\Omega \, | \, \exists \theta \in \mathbb{S}^{n-1} \textrm{ such that } \theta \cdot \xi = 0 \textrm{ and } \chi_{V}^{\#}(x,\theta) \neq 0 \}.\label{microvisibleset}
\end{equation}
\end{definition}
Here $\chi_{V}^{\#}(x,\theta)$ is the extension of $\chi_{V}:\partial_{+}S\Omega_{1} \to \mathbb{R}$ to $\Omega_{1} \times \mathbb{S}^{n-1}$ defined by $\chi_{V}^{\#}(x,\theta) = \chi_{V}(x + \tau_{+}(x,\theta)\theta, \theta)$.

\begin{figure}
\centering
\includegraphics[width=0.5\textwidth]{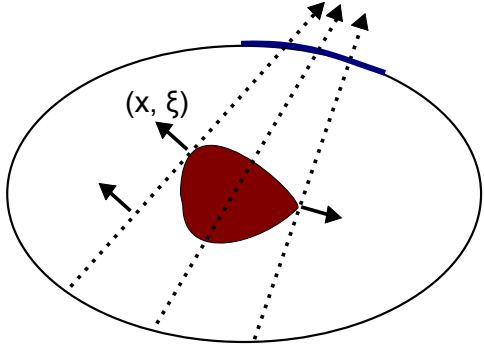}
\caption{An example where $(x,\xi)$ is in the microlocally visible set $\mathcal{M}'$, given that the source $f$ is the characteristic function of the shaded set.}
\end{figure}

\begin{theorem}Let $l$ be a positive integer. There exists an open dense set $\mathcal{O}_{l}$ of pairs $(\sigma, k) \in  C^{\infty}(\overline{\Omega} \times \mathbb{S}^{n-1}) \times  C^{\infty}(\overline{\Omega}_{x} \times \mathbb{S}_{\theta'}^{n-1} \times \mathbb{S}_{\theta}^{n-1})$ depending on $l$ such that given $(\sigma, k) \in \mathcal{O}_{l}$,  if $(z,\xi) \in \mathcal{M}'$, then there exists a function $v \in H^{l}(\Omega)$ such that
\begin{equation}
(z,\xi) \notin \textrm{WF}(X_{V}^{*}X_{V}f) \Longrightarrow (z,\xi) \notin  \textrm{WF}(f+v).
\end{equation}
\label{microtheorem}
\end{theorem}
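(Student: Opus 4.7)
The plan is to show that, microlocally at covectors $(z,\xi) \in \mathcal{M}'$, the normal operator $X_V^*X_V$ agrees with an elliptic pseudodifferential operator of order $-1$ modulo an error that is smoothing of order $l+1$ for a generic choice of $(\sigma,k)$. The wave front conclusion then follows from the pseudolocal property of $\Psi$DOs together with a parametrix construction, exactly along the lines of the microlocal approach in \cite{xraygeneric}. To start, I would expand the solution $u$ of (\ref{transport}) as a Neumann series in the scattering operator $Kf(x,\theta) = \int k(x,\theta,\theta')f(x,\theta')\,d\theta'$. Writing $T_0$ for the ballistic transport solution operator (the case $k=0$), we have $u = \sum_{j\ge 0}(T_0 K)^j T_0 f$, which induces a splitting $X_V f = X_V^{(0)}f + X_V^{(1)}f$, where $X_V^{(0)}$ is a weighted X-ray transform with attenuation weight and cutoff $\chi_V$, and $X_V^{(1)}$ collects the terms involving at least one scattering. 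A standard change of variables shows that $X_V^{(0)*}X_V^{(0)}$ is a $\Psi$DO of order $-1$ whose principal symbol at $(x,\xi)$ is
\begin{equation*}
p(x,\xi)=\frac{c_n}{|\xi|}\int_{\{\theta\in\mathbb{S}^{n-1}\,:\,\theta\cdot\xi=0\}}\chi_V^{\#}(x,\theta)^2\, E_\sigma(x,\theta)^2\, d\theta,
\end{equation*}
with $E_\sigma$ a positive attenuation factor. By the definition of $\mathcal{M}'$, $p$ is strictly positive on $\mathcal{M}'$, so $X_V^{(0)*}X_V^{(0)}$ is microlocally elliptic there.

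Next, I would analyze the remainder $R := X_V^*X_V - X_V^{(0)*}X_V^{(0)}$, which consists of the mixed terms $X_V^{(0)*}X_V^{(1)}$, $X_V^{(1)*}X_V^{(0)}$, and $X_V^{(1)*}X_V^{(1)}$. Each factor of $K$ integrates against a smooth angular kernel and so smooths in the angular variable, and successive compositions of such $K$ with $T_0$ yield additional spatial regularity by averaging, in the spirit of the compactness estimates used in \cite{inversesource}. To upgrade compactness into a quantitative smoothing of order $l+1$, I would combine the analytic Fredholm argument used in Theorem 1 of \cite{inversesource} with the Baire-type perturbation argument of \cite{xraygeneric}, defining $\mathcal{O}_l$ as the open dense set of $(\sigma,k)$ for which $R:L^2(\Omega)\to H^{l+1}(\Omega_1)$ is bounded.

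Finally, fix $(z,\xi)\in\mathcal{M}'$ with $(z,\xi)\notin\textrm{WF}(X_V^*X_V f)$, and let $E$ be a properly supported parametrix for $X_V^{(0)*}X_V^{(0)}$ microlocally near $(z,\xi)$, so that $E\,X_V^{(0)*}X_V^{(0)}=I+S$ in a conic neighborhood of $(z,\xi)$ with $S\in\Psi^{-\infty}$. Then
\begin{equation*}
E\,X_V^*X_V f = f + Sf + E R f
\end{equation*}
microlocally near $(z,\xi)$. Since $E$ has order $+1$ and $R$ gains $l+1$ derivatives, the function $v := -Sf - E R f$ lies in $H^l(\Omega)$. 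The pseudolocal property of $E$ gives $(z,\xi)\notin\textrm{WF}(E\,X_V^*X_V f)$, and therefore $(z,\xi)\notin\textrm{WF}(f+v)$, as required. The main obstacle is the second step: showing that $R$ is smoothing of order $l+1$ for $(\sigma,k)$ in an open dense set, since the scattering contributions are generally only compact (which is what the full-data theory needs) rather than finitely smoothing. Pinning down the correct perturbative statement—so that $\mathcal{O}_l$ is open and dense for every fixed $l$—is the technical heart of the argument and is the reason the recovery is only modulo $H^l$ rather than $C^\infty$.
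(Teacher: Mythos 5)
Your overall architecture (split off the ballistic term $I_{\sigma,V}=X_V^{(0)}$, check that $I_{\sigma,V}^{*}I_{\sigma,V}$ is an order $-1$ $\Psi$DO elliptic exactly on $\mathcal{M}'$ by the definition of $\chi_V^{\#}$, then run a microlocal parametrix) is the same as the paper's. The gap is in your second step: you define $\mathcal{O}_l$ as the set of $(\sigma,k)$ for which the scattering remainder $R=X_V^{*}X_V-I_{\sigma,V}^{*}I_{\sigma,V}$ maps $L^2(\Omega)$ boundedly into $H^{l+1}(\Omega_1)$, and you need this set to be open and dense. It is not: the single-scattering contribution $I_{\sigma,V}^{*}\chi_V R_+T_1^{-1}KT_1^{-1}(\cdots)$ is a composition of two weakly singular averaging operators, each gaining exactly one derivative, so it is genuinely a pseudodifferential operator of order $-2$ for every smooth $k\neq 0$ with nonvanishing symbol there. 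No perturbation of $(\sigma,k)$ makes it smoothing of order $l+1$ for $l\geq 2$, so your $\mathcal{O}_l$ essentially reduces to a neighborhood of $k=0$ and is not dense. You flag this as ``the technical heart,'' but the resolution you propose (Baire/analytic-Fredholm genericity) cannot deliver the statement you need, because the obstruction is not a resonance phenomenon --- it is the honest order of the operator.

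The paper's way around this is structural rather than perturbative: it writes $\mathcal{L}_V=F+R$ where $F$ is a $\Psi$DO of order $-2$ and only $R$ maps $L^2(\Omega)\to H^{l}(\Omega)$. The operator $F$ is produced by Borel-summing the full symbols of $(KT_1^{-1})^{j}J$ (each of order $-j$, no convergence of the Neumann series required) into a $\Psi$DO $\widetilde{F}$ of order $0$ with $(\textrm{Id}-KT_1^{-1})^{-1}J=\widetilde{F}-\widetilde{R}$ and $\widetilde{R}:L^2\to H^{l}$. The order $-2$ part $F$ is then \emph{absorbed into the elliptic part}: one inverts $\textrm{Id}+QF$, which is elliptic of order $0$ with principal symbol $1$, and only $(\textrm{Id}+QF)^{-1}QRf$ survives as the $H^{l}$ error $v$. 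Genericity of $(\sigma,k)$ enters only to ensure, via the analytic Fredholm theorem applied to $(KT_1^{-1})^2$ on each $H^{l'}$, $0\leq l'\leq l+1$, that $(\textrm{Id}-KT_1^{-1})^{-1}$ is bounded on that finite scale of Sobolev spaces --- a much weaker (and actually achievable) demand than the one your argument places on it. To repair your proof you would need to add both ingredients: the symbolic asymptotic summation isolating the finitely smoothing remainder, and the extra step inverting $\textrm{Id}+QF$.
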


\section{The Direct Problem \label{direct}}
For notational convenience and to be consistent with convention, we set
\begin{equation}
T_{0} = \theta \cdot \nabla_{x}, \quad T_{1} = T_{0} + \sigma, \quad T = T_{0} + \sigma - K,
\end{equation}
where $\sigma$ denotes the operation of multiplication by $\sigma(x,\theta)$, and $K$ is defined by
\begin{equation}
Kf(x,\theta) = \int_{\mathbb{S}^{n-1}}k(x,\theta,\theta')f(x,\theta')\,d\theta'.
\end{equation}

If $k=0$, we have that
\begin{equation}
Xf(x,\theta) = I_{\sigma}f(x,\theta) := \int_{\tau_{-}(x,\theta)}^{0}E(x+t\theta,\theta)f(x+t\theta)\,dt, \quad (x,\theta) \in \partial_{+}S\Omega,
\end{equation}
where $\tau_{\pm}(x,\theta)$ is the arrival time defined by $(x + \tau_{\pm}(x,\theta)\theta,\theta) \in \partial_{\pm}S\Omega$ for $(x,\theta) \in \Omega \times \mathbb{S}^{n-1}$.
Here $E$ is defined by
\begin{equation}
E(x,\theta) = \exp\left( -\int_{0}^{\infty}\sigma(x + s\theta, \theta)\,ds \right).
\end{equation}
Note that if $\sigma > 0$ depends on $x$ only, then $I_{\sigma}$ is just the attenuated X-ray transform along the line through $x$ in the direction $\theta$. Moreover, in this case it is injective and \cite{novikov} gives an explicit inversion formula.

In the general case with $k \neq 0$, it is shown in (Theorem 1, \cite{inversesource}) that the direct problem (\ref{transport}) is well-posed even for $f$ depending on $x$ and $\theta$. That is, for an open and dense set of pairs 
\begin{equation*}
(\sigma, k) \in C^{2}(\overline{\Omega} \times \mathbb{S}^{n-1}) \times C^{2}(\overline{\Omega}_{x} \times \mathbb{S}_{\theta'}^{n-1}; C^{n+1}(\mathbb{S}_{\theta}^{n-1})),
\end{equation*}
including a neighborhood of $(0,0)$, the direct problem $Tu = f$ with $u \vert_{\partial_{-}S\Omega} = 0$ has a unique solution $u \in L^{2}(\Omega \times \mathbb{S}^{n-1})$ for any $f \in L^{2}(\Omega \times \mathbb{S}^{n-1})$ depending on both $x$ and $\theta$. Furthermore, the complete data operator $X$, which is only a priori bounded when restricted to sufficiently smooth $f$, extends to a bounded operator
\begin{equation*}
X: L^{2}(\Omega \times \mathbb{S}^{n-1}) \to L^{2}(\partial_{+}S\Omega, d\Sigma).
\end{equation*}
The proof of this relies on using the fact that
\begin{equation}
[T_{1}^{-1}f](x,\theta) = \int_{-\infty}^{0}\exp\left(  -\int_{s}^{0}\sigma(x + \tau \theta, \theta)\,d\tau \right)f(x + s\theta,\theta)\,ds, \label{t1inverse}
\end{equation}
as well as Fredholm Theory applied to the resolvent $(\textrm{Id} - T_{1}^{-1}K)^{-1}$.

In order to solve $Tu = f$, we observe that $Tu = T_{1}u - Ku = f$, and so applying $T_{1}^{-1}$ to both sides yields
\begin{equation}
u = T_{1}^{-1}(Ku + f).
\end{equation}
This is equivalent to the integral equation
\begin{equation}
(\textrm{Id} - T_{1}^{-1}K)u = T_{1}^{-1}f.
\end{equation}
Thus, if $\textrm{Id} - T_{1}^{-1}K$ is invertible, we can solve the forward problem uniquely for 
\begin{equation}
u = T^{-1}f = (\textrm{Id} - T_{1}^{-1}K)^{-1}T_{1}^{-1}f.
\end{equation}
To find $k$ such that $T^{-1}$ exists, we note that $(\textrm{Id} - T_{1}^{-1}K)^{-1}T_{1}^{-1} = T_{1}^{-1}(\textrm{Id} - KT_{1}^{-1})^{-1}$ and look at the operator
\begin{equation}
A(\lambda) = \left( \textrm{Id} - (\lambda KT_{1}^{-1})^{2}\right)^{-1} \label{resolvent2}
\end{equation}
It is shown in \cite{inversesource} that the operator $\left(KT_{1}^{-1}\right)^{2}$ is compact, and for $\lambda = 0$ the resolvent (\ref{resolvent2}) exists. By the analytic Fredholm theorem (Theorem VI.14, \cite{simon1}), we have that $A(\lambda)$ is a meromorphic family of bounded operators with poles contained in a discrete set. It can be shown that
\begin{equation}
\left( \textrm{Id} - \lambda KT_{1}^{-1}\right)^{-1} = \left( \textrm{Id} + \lambda KT_{1}^{-1}\right) A(\lambda). \label{resolvent1}
\end{equation}
In particular, the r.h.s above is a easily seen to be a right inverse. To show that it is a left inverse as well, we can expand $A(\lambda)$ as a Neumann series for $\|KT_{1}^{-1}\| \ll 1$ and then use analytic continuation to show that it remains true for all $\lambda$ that are not poles of $A(\lambda)$. These ideas will be useful later when proving Theorem \ref{injectivetheorem}.

\section{The Inverse Source Problem with Partial Data \label{inverse}}
Let $V \subset \partial_{+} S\Omega$ be some open subset. Then the boundary measurements for the problem (\ref{transport}) with partial data are modelled by
\begin{equation}
X_{V}f(x,\theta) := \chi_{V}(x,\theta)u \vert_{\partial_{+}S\Omega}, \quad (x,\theta) \in \partial_{+}S\Omega
\end{equation}
where $u(x,\theta)$ is a solution of (\ref{transport}), and $\chi_{V}:\partial_{+}S\Omega \to [0,1]$ is a smooth function equal to $0$ for $(x,\theta) \notin V$ and $\chi_{V}(x,\theta) = 1$ for $(x,\theta) \in \widetilde{V} \Subset  V$ for some open $\widetilde{V}$. We also define the operator $J:L^{2}(\Omega) \to L^{2}(\Omega \times \mathbb{S}^{n-1})$ by
\begin{equation*}
Jf(x,\theta) = f(x).
\end{equation*}

If $k=0$, we have that
\begin{equation}
X_{V}f(x,\theta) = I_{\sigma, V}f(x,\theta) := \chi_{V}(x,\theta) I_{\sigma}f(x,\theta).
\end{equation}
We will proceed as in \cite{inversesource} by looking at $X_{V}$ as a perturbation of $I_{\sigma, V}$. Wishful thinking suggests that $X_{V}^{*}X_{V}$ is a relatively compact perturbation of $I_{\sigma, V}^{*} I_{\sigma, V}$, the normal operator corresponding to $k=0$. Here $X_{V}^{*}$ is the adjoint of $X_{V}$ with respect to the measure $d\Sigma$ on $\partial_{+}S\Omega$ given by
\begin{equation}
d\Sigma = |\theta \cdot \nu(x)|\,dS_{x}\,dS_{\theta},
\end{equation}
where as stated earlier, $\nu(x)$ is the outward unit normal to the boundary $\partial \Omega$.

First let us consider the case when $k=0$ and compute $I_{\sigma,V}^{*}$. Note that $I_{\sigma,V}:L^{2}(\Omega \times S^{n-1})\to L^{2}(\partial_{+}S\Omega, d\Sigma)$, and hence $I_{\sigma,V}^{*}:L^{2}(\partial_{+}S\Omega, d\Sigma) \to L^{2}(\Omega \times \mathbb{S}^{n-1}).$ For now we will restrict ourselves to applying $I_{\sigma,V}$ to functions $f$ that depend on $x$ only. Given $h(x,\theta) \in L^{2}(\partial_{+}S\Omega, d\Sigma)$ and $f(x)\in L^{2}(\Omega)$, one can show that
\begin{equation*}
\langle I_{\sigma, V}^{*}h(x), f(x) \rangle_{L^{2}(\Omega \times \mathbb{S}^{n-1})} = \int_{\Omega} \int_{\mathbb{S}^{n-1}}h^{\#}(y,\theta)\chi_{V}^{\#}(y,\theta)\overline{E}(y,\theta)\overline{f}(y)\,d\theta\, dy,
\end{equation*}
where $g^{\#}(x,\theta)$ is the extension of $g:\partial_{+}S\Omega \to \mathbb{R}$ to $\Omega \times \mathbb{S}^{n-1}$ defined by $g^{\#}(x,\theta) = g(x + \tau_{+}(x,\theta)\theta, \theta)$.
We also made use of the diffeomorphism $\phi:\partial_{+}S\Omega \times \mathcal{O} \to \Omega \times \mathbb{S}^{n-1}$ where $\mathcal{O} = \{(\tau_{-}(x,\theta), 0)\, | \, (x,\theta) \in \partial_{+}S\Omega \}$. This map is defined by $\phi(x,\theta,t) = (x+t\theta,\theta)$. The Jacobian determinant of $\phi$ is $|\nu(x)\cdot \theta|$; see (Lemma 2.1, \cite{choulli1}). Note that $\phi^{-1}:\Omega \times \mathbb{S}^{n-1} \to \partial_{+}S\Omega \times \mathcal{O}$ is given by $\phi^{-1}(x,\theta) = (x+\tau_{+}(x,\theta)\theta, \theta, \tau_{+}(x,\theta))$. Hence the adjoint in the no-scattering case has the equation
\begin{equation}
I_{\sigma,V}^{*}h(x,\theta) = \int_{S^{n-1}}\overline{E}(x,\theta)h^{\#}(x,\theta)\chi_{V}^{\#}(x,\theta)\,d\theta.
\end{equation}

\subsection{The Normal Operator $I_{\sigma, V}^{*}I_{\sigma,V}$}
Similar to the way in which we derived the adjoint operator $I_{\sigma, V}^{*}$, we may compute the normal operator $I_{\sigma,V}^{*}I_{\sigma,V}: L^{2}(\Omega) \to L^{2}(\Omega)$ as
\begin{align}
& \langle I_{\sigma,V}^{*}I_{\sigma,V}f(x),g(x)\rangle_{L^{2}(\Omega)} \notag \\
& = \int_{\Omega}\left[ \int_{S^{n-1}}\overline{E}(x,\theta)\chi_{V}^{\#}(x,\theta) \left( I_{\sigma,V}f(x,\theta) \right)^{\#}\,d\theta \right] \overline{g}(x)\,dx \notag \\
& = \int_{\Omega}\left[ \int_{S^{n-1}}\overline{E}(x,\theta)\chi_{V}^{\#}(x,\theta) \left( \chi_{V}(x,\theta)\int_{\mathbb{R}}E(x+t\theta,\theta)f(x+t\theta)\,dt \right)^{\#} \right] \overline{g}(x)\,d\theta\, dx \label{normaloperator}\\
& = \int_{\Omega}\int_{\Omega}\frac{\overline{E}(x,\frac{y-x}{|y-x|})\left|\chi_{V}^{\#}(x,\frac{y-x}{|y-x|})\right|^{2}E(y,\frac{y-x}{|y-x|})f(y)}{|y-x|^{n-1}} \overline{g}(x)\,dy\, dx. \notag
\end{align}
In the last line we used the substitution $y = x + t \theta$ to convert the integral over $\mathbb{S}^{n-1}\times \mathbb{R}$ in $(\theta,t)$ to an integral over $\Omega$ in $y$. Thus
\begin{equation}
I_{\sigma,V}^{*}I_{\sigma,V}f(x) = \int_{\Omega}\frac{ E(y,\frac{y-x}{|y-x|})\overline{E}(x, \frac{y-x}{|y-x|}) \left| \chi_{V}^{\#}(x, \frac{y-x}{|y-x|}) \right|^{2} f(y)}{|y-x|^{n-1}} \,dy.
\end{equation}

In the case that $\sigma$ is $C^{\infty}$, we would like to know where $I_{\sigma, V}^{*}I_{\sigma,V}$ is elliptic. One can show using (Theorem 3.4, \cite{grigis}) that
\begin{equation}
I_{\sigma,V}^{*}I_{\sigma,V}f(x)  = (2\pi)^{-n}\int e^{i(x-y)\cdot \xi}b(x,\xi)f(y)\,dy\, d\xi
\end{equation}
where
\begin{equation}
b(x,\xi) = (2\pi)^{-n} \int e^{-i(x-y)\cdot \xi}\frac{ E(y,\frac{y-x}{|y-x|})\overline{E}(x, \frac{y-x}{|y-x|}) \left| \chi_{V}^{\#}(x, \frac{y-x}{|y-x|}) \right|^{2} }{|y-x|^{n-1}}\,dy.\label{normalsymbol}
\end{equation}
We can now see that for $\sigma \in C^{\infty}(\Omega \times \mathbb{S}^{n-1})$ $I_{\sigma,V}^{*}I_{\sigma,V}$ is a pseudodifferential operator of order $-1$, since its kernel is weakly singular. More specifically, Proposition 1 of \cite{inversesource} gives that $I_{\sigma,V}^{*}I_{\sigma,V}:L^{2}(\Omega) \to H^{1}(\Omega)$. See also \cite{stein}.

Now, unfortunately equation (\ref{normalsymbol}) isn't particularly useful when trying to determine where $b(x,\xi)$ is elliptic. But recall (\ref{normaloperator}), which shows that
\begin{equation}
I_{\sigma,V}^{*}I_{\sigma,V}f(x) =  \int_{\mathbb{S}^{n-1}}\int_{\mathbb{R}}A(x,t,\theta)f(x+t\theta)\,dt\,d\theta,
\end{equation}
for a particular function $A$. By Lemma 4.2 of \cite{xraygeneric} we have that if $A \in C^{\infty}(\Omega_{1} \times \mathbb{R} \times \mathbb{S}^{n-1})$ (which occurs if $\sigma \in C^{\infty}(\Omega_{1} \times \mathbb{S}^{n-1})$), then $I_{\sigma,V}^{*}I_{\sigma,V}$ is a classical $\Psi$DO of order $-1$ with full symbol
\begin{equation}
b(x,\xi) \sim \sum_{m=0}^{\infty}b_{m}(x,\xi),
\end{equation}
where
\begin{equation}
b_{m}(x,\xi) = 2\pi \frac{i^{m}}{m!}\int_{\mathbb{S}^{n-1}}\partial_{t}^{m}A(x,0,\theta)\delta^{(m)}(\theta \cdot \xi)\,d\theta.
\end{equation}

To check for ellipticity, we need only look at the principal symbol corresponding to when $m=0$. This is just
\begin{equation}
b_{0}(x,\xi) = 2\pi \int_{\theta \cdot \xi = 0}|E(x,\theta)|^{2}|\chi_{V}^{\#}(x,\theta)|^{2}\,dS(\theta). \label{principalsymbol}
\end{equation}
Since $E$ is nonvanishing, we immediately have by (\ref{principalsymbol}) that $b(x,\xi)$ is elliptic on the set $\mathcal{M}'$.

\section{Injectivity of $X_{V}$ Restricted to the Visible Set \label{partialinjective}}

\subsection{Injectivity of $I_{\sigma,V}$ and $I_{\sigma,V}^{*}I_{\sigma,V}$} \label{injectivity}
Since we are only able to access some open subset $V$ of $\partial_{+} S\Omega_{1}$, we cannot expect for the operator $I_{\sigma, V}$ or the normal operator $I_{\sigma,V}^{*}I_{\sigma,V}$ to be injective. However, from \cite{xraygeneric} we can obtain injectivity for sources $f$ supported in a particular subset of $\Omega$. But first we must introduce the notion of a regular family of curves. We will use the notation $l_{x,\theta}$ to denote the line segment through $x \in \Omega$ in the direction $\theta \in \mathbb{S}^{n-1}$ with endpoints on $\partial \Omega_{1}$. We can also assume that $l_{x,\theta}(0) = x$ and $l_{x,\theta}'(0) = \theta$. It is also clear that the lines $l_{x,\theta}$ depend smoothly on $(x, \theta) \in T\Omega$ in the sense that the function $l(x,\theta,t) = l_{x,\theta}(t)$ depends smoothly on $x$, $\theta$ and $t$ separately. In fact, we have $l(x,\xi,t) = x + t \xi$ where $t \in (a(x,\xi), b(x,\xi))$, an interval containing $0$, and $l(x,\xi,a(x,\xi)), l(x,\xi,b(x,\xi)) \in \partial \Omega_{1}$.

\begin{definition}Let $\Gamma$ be an open family of smooth (oriented) curves on $\Omega$, with a fixed parametrization on each one of them, with endpoints on $\partial \Omega$, such that for each $(x,\xi) \in T\Omega \setminus 0$, there is at most one curve $\gamma_{x,\xi} \in \Gamma$ through $x$ in the direction $\xi$, and the dependence on $(x,\xi)$ is smooth. We say that $\Gamma$ is a \textit{regular} family of curves, if for any $(x,\xi) \in T^{*}\Omega$, there exists $\gamma \in \Gamma$ through $x$ normal to $\xi$ without conjugate points.\end{definition}

\begin{remark}In our specific case, all curves taken are straight lines and have no conjugate points. Moreover, if we let $\Gamma_{\mathcal{M}}$ be the set of lines in $\Omega_{1}$ which intersect $\mathcal{M}$, then it turns out (as is shown in the proof of Theorem \ref{Iinjective}) that $\Gamma_{\mathcal{M}}$ is a regular family when restricted to $\mathcal{M}$. This is the motivation behind how $\mathcal{M}$ was defined in the first place.\end{remark}

\begin{theorem}Let $\sigma$ be analytic on $\Omega$. If $I_{\sigma,V}f = 0$ for $f \in \mathcal{D}'(\Omega_{1})$ supported in $W \Subset \mathcal{M}^{\textrm{int}}$, then $f = 0$. In particular, $I_{\sigma,V}$ is injective on $L^{1}(W)$.\label{Iinjective}\end{theorem}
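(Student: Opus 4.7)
The plan is to use analytic microlocal analysis to show that $f$ is real analytic on $\mathcal{M}^{\mathrm{int}}$, and then to derive $f \equiv 0$ from the support hypothesis $\mathrm{supp}(f) \subset W \Subset \mathcal{M}^{\mathrm{int}}$ by analytic unique continuation. Since $\chi_V \equiv 1$ on $\widetilde V$ and $I_{\sigma,V}f = \chi_V I_\sigma f = 0$, we have $I_\sigma f = 0$ on $\widetilde V$, so the attenuated X-ray transform of $f$ vanishes on an open family of lines.

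To prove analyticity at a fixed $x_0 \in \mathcal{M}^{\mathrm{int}}$, I would show $(x_0, \xi_0) \notin \mathrm{WF}_A(f)$ for every $\xi_0 \neq 0$. By the defining property of $\mathcal{M}$ and the openness of $\mathcal{M}^{\mathrm{int}}$ and $\widetilde V$, one can select $(z, \theta^\perp) \in \widetilde V$ with $\theta^\perp \perp \xi_0$ and $l_{z,\theta^\perp}$ passing through $x_0$, so that $I_\sigma f$ vanishes on an open neighborhood (in line-parameter space) of this line. The next step is to invoke an analytic microlocal regularity lemma for weighted X-ray transforms with analytic weights, adapted from \cite{xraygeneric}: with $\sigma$ analytic, vanishing of $I_\sigma f$ on a neighborhood of a line through $x_0$ perpendicular to $\xi_0$ forces $(x_0, \xi_0) \notin \mathrm{WF}_A(f)$. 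The mechanism is that a suitable analytic localization of $I_\sigma^{\ast} I_\sigma$ is an analytic elliptic $\Psi$DO at $(x_0, \xi_0)$---compare the principal symbol computation leading to (\ref{principalsymbol})---so an analytic parametrix produces the required analytic regularity. Varying $\xi_0$ places $x_0$ outside the analytic singular support of $f$, so $f$ is real analytic throughout $\mathcal{M}^{\mathrm{int}}$.

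Analytic unique continuation then finishes the argument: because $W \Subset \mathcal{M}^{\mathrm{int}}$, the open set $\mathcal{M}^{\mathrm{int}} \setminus \overline W$ is nonempty and meets every connected component of $\mathcal{M}^{\mathrm{int}}$ that intersects $W$. Since $f$ is analytic on $\mathcal{M}^{\mathrm{int}}$ and vanishes on that open set, the identity theorem forces $f \equiv 0$ on each such component, hence on all of $\mathcal{M}^{\mathrm{int}}$, and thus everywhere because $\mathrm{supp}(f) \subset W \subset \mathcal{M}^{\mathrm{int}}$. Injectivity on $L^1(W)$ is immediate since $L^1(W) \hookrightarrow \mathcal{D}'(\Omega_1)$.

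The principal obstacle is the analytic microlocal regularity lemma itself. Ellipticity of the normal operator on $\mathcal{M}'$ was obtained in the $C^\infty$ category above, but its analytic counterpart demands an analytic parametrix, which is delicate because $\chi_V$ is only smooth. One circumvents this by introducing an auxiliary compactly supported analytic cutoff $\varphi$ localizing near $(z, \theta^\perp) \in \widetilde V$ and verifying that the resulting operator $I_\sigma^{\ast}\varphi I_\sigma$ is an analytic elliptic $\Psi$DO at $(x_0, \xi_0)$, after which Sjöstrand-style analytic $\Psi$DO calculus yields analytic regularity. The nonvanishing of $E$ together with the freedom---guaranteed by $x_0 \in \mathcal{M}^{\mathrm{int}}$---to position $\varphi$ near a line through $x_0$ perpendicular to $\xi_0$ ensures that ellipticity survives this analytic localization.
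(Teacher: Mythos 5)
Your proposal is correct and follows essentially the same route as the paper: the paper verifies that the lines meeting $\mathcal{M}$ with entry points in $V$ form a regular family of (analytic) curves and then cites Theorem 1 of \cite{xraygeneric} wholesale to conclude $f$ is analytic on $\mathcal{M}$, finishing with the same unique-continuation argument from $\mathrm{supp}(f) \subset W \Subset \mathcal{M}^{\mathrm{int}}$. The analytic microlocal regularity mechanism you sketch (analytic ellipticity of a localized normal operator at covectors conormal to lines in the accessible set) is precisely the content of that cited theorem, so your write-up just unpacks what the paper leaves as a citation.
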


\begin{proof}It is clear that the collection $\Gamma$ of lines in $\Omega_{1}$ is an analytic regular family of curves (see \cite{xraygeneric}). Let $\Gamma_{\mathcal{M}}$ be only those lines which pass through $\mathcal{M}$. We claim that $\Gamma_{\mathcal{M}}$ is a regular family of curves when restricted to $\mathcal{M}$. To see this, let $x \in \mathcal{M}$ and $\theta \in S^{n-1}$. Then by definition of $\mathcal{M}$ there exists $z \in V$ and an angle $\theta^{\perp}$ normal to $\theta$ such that the line $l_{z,\theta^{\perp}}$ passes through $x$ and $(z,\theta^{\perp}) \in \partial_{+}S\Omega$. By definition we also have that $l_{z,\theta^{\perp}} \in \Gamma_{\mathcal{M}}$, which proves the claim. Now, by Theorem 1 of \cite{xraygeneric} we have that $f$ is analytic on $\mathcal{M}$ with support properly contained in $\mathcal{M}^{\textrm{int}}$. In particular, $f = 0$ on an open subset of each component of $\mathcal{M}$.  Therefore $f = 0$.\end{proof}

Although the definition of $\mathcal{M}$ is a bit cryptic and difficult to visualize, it is possible to easily visualize an important subset of $\mathcal{M}$ when $V$ has a certain form, as shown by Lemma \ref{visiblesubset}. Here we use the notation $\mathrm{ch }{A}$ to denote the closed convex hull of a set $A \subset \rn{n}$.

\begin{lemma}Suppose that $V = \pi^{-1}(W)$ where $W$ consists of a countable collection of disjoint connected open subsets of $\partial \Omega_{1}$, and $\pi:\partial_{+}S\Omega_{1} \to \partial \Omega_{1}$ is the natural projection. Then $\bigcup_{j} \left(\mathrm{ch }{ W_{j} }\right)^{\mathrm{int}} \subset \mathcal{M}$ where $W_{j}$ is a given component of $W$.\label{visiblesubset}\end{lemma}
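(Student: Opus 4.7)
The plan is to show that for an arbitrary $x \in (\mathrm{ch}\, W_j)^{\mathrm{int}}$ and an arbitrary $\theta \in \mathbb{S}^{n-1}$, one can produce $z \in W_j$ and a direction $\theta^\perp$ orthogonal to $\theta$ such that $(z, \theta^\perp) \in V$ and the line $l_{z, \theta^\perp}$ passes through $x$; this directly verifies the defining property of $\mathcal{M}$. Let $H$ denote the affine hyperplane through $x$ perpendicular to $\theta$, with open half-spaces $H^{+}$ and $H^{-}$.

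The key step is to show that $W_j \cap H \neq \emptyset$. If $W_j$ were contained in the closed half-space $\overline{H^+}$, then by convexity of $\overline{H^+}$ we would have $\mathrm{ch}(W_j) \subseteq \overline{H^+}$; but every neighborhood of $x \in H$ meets $H^-$, contradicting $x \in (\mathrm{ch}\, W_j)^{\mathrm{int}}$. Hence $W_j$ contains a point of $H^-$, and by the symmetric argument a point of $H^+$. Since $W_j \cap H^+$ and $W_j \cap H^-$ are disjoint open subsets of $W_j$ whose union would exhaust $W_j$ if the hyperplane were missed, connectedness of the single component $W_j$ forces $W_j \cap H \neq \emptyset$.

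Next, I pick any $z \in W_j \cap H$ and set $\theta^\perp := (z-x)/|z-x|$. Since both $x$ and $z$ lie in $H$, the relation $\theta^\perp \cdot \theta = 0$ is immediate. Note also that $x \in \Omega_1$, because $\mathrm{ch}(W_j) \subset \overline{\Omega_1}$ and the interior of any subset of $\overline{\Omega_1}$ lies in $\Omega_1$. By strict convexity of $\Omega_1$, the segment from $x$ to $z$ meets $\partial \Omega_1$ only at $z$, with $\theta^\perp \cdot \nu(z) > 0$, so $(z, \theta^\perp) \in \partial_+ S\Omega_1$. Because $\pi(z, \theta^\perp) = z \in W_j \subseteq W$, we have $(z, \theta^\perp) \in \pi^{-1}(W) = V$, and by construction $l_{z, \theta^\perp}$ contains $x$, yielding $x \in \mathcal{M}$ (under the tacit convention that we restrict to points of $(\mathrm{ch}\, W_j)^{\mathrm{int}} \cap \Omega$, since $\mathcal{M} \subset \Omega$ by definition).

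The only real subtlety is the convex-hull/half-space step used to force $W_j$ to meet both $H^+$ and $H^-$; everything else is routine. I do not anticipate any serious obstacle — the argument is essentially a separation-of-convex-sets observation combined with the elementary topological fact that a connected set with points on both sides of a hyperplane must intersect the hyperplane, together with the usual strict-convexity arguments to identify the exit direction.
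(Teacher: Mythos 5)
Your proof is correct, and it follows the same basic strategy as the paper's: use the convex hull hypothesis to force $W_j$ to meet the hyperplane through $x$ perpendicular to $\theta$, then use strict convexity of $\Omega_1$ to see that the resulting direction is outgoing at $z$. However, your execution is noticeably more careful than the paper's. The paper fixes an \emph{arbitrary} $\theta^{\perp}$ orthogonal to $\theta$ and asserts that the single line $l_{x,\theta^{\perp}}$ must intersect $W_{\alpha}$, citing a (garbled) hyperplane characterization of the convex hull; as stated this is false for $n \geq 3$ (e.g.\ $W_{\alpha}$ an equatorial band on a sphere, $x$ the center, $\theta^{\perp}$ the polar axis), and the paper never explicitly invokes connectedness of $W_{\alpha}$ even though it is essential (two antipodal caps give a counterexample otherwise). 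Your argument --- separation by the closed half-spaces $\overline{H^{\pm}}$ to show $W_j$ meets both open half-spaces, then connectedness of $W_j$ to conclude $W_j \cap H \neq \emptyset$, and only \emph{then} choosing $\theta^{\perp} = (z-x)/|z-x|$ --- supplies exactly the missing steps, and your remarks that $x \in \Omega_1$ and that one should tacitly intersect with $\Omega$ are appropriate. In short: same idea, but your write-up repairs a genuine gap in the paper's own proof.
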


\begin{proof}Suppose $W = \bigcup_{\alpha} W_{\alpha}$ where $W_{\alpha}\subset \partial \Omega$ are disjoint connected open sets. Let $x \in \bigcup_{\alpha} \left(\mathrm{ch}{ W_{\alpha}}\right)^{\mathrm{int}}$. Let $\theta \in S^{n-1}$ and let $\theta^{\perp}$ be any vector perpendicular to $\theta$. If we consider that 
\begin{equation*}
\mathrm{ch}{ W_{\alpha} } = \overline{\{ \mathrm{hyperplanes  } \,  P \subset \rn{n} \, | \, P \cap W_{\alpha} = \emptyset \}^{\mathrm{c}}},
\end{equation*}
then $l_{x,\theta^{\perp}}$ must intersect $W_{\alpha}$ at some point $z$. Changing the direction of $\theta^{\perp}$ if necessary and using the strict convexity of $\Omega_{1}$, we have that $(z,\theta^{\perp}) \in \partial_{+}S\Omega_{1}$. This proves that $\left(\mathrm{ch}{ W_{\alpha} }\right)^{\mathrm{int}} \subset \mathcal{M}$ for all $\alpha$.\end{proof}

\subsection{Computing $X_{V}$ as a perturbation of $I_{\sigma,V}$ for $k\neq 0$}

In order to approach the case that $k \neq 0$, we will compute explicitly how $X_{V}$ differs from $I_{\sigma,V}$. Note that
\begin{equation}
Xf = \chi_{V}R_{+}T^{-1}f = \chi_{V}R_{+}(\textrm{Id} - T_{1}^{-1}K)^{-1}T_{1}^{-1}f,
\end{equation}
where
\begin{equation*}
R_{+}h = h \vert_{\partial_{+}S\Omega}.
\end{equation*}
If $f$ depends on $x$ only (the case we are primarily interested in), then
\begin{equation}
X_{V}f = \chi_{V}R_{+}T^{-1}Jf = \chi_{V}R_{+}(\textrm{Id} - T_{1}^{-1}K)^{-1}T_{1}^{-1}Jf.
\end{equation}
Now consider the identity
\begin{equation}
(\textrm{Id} - T_{1}^{-1}K)^{-1}T_{1}^{-1} = T_{1}^{-1}(\textrm{Id} - K T_{1}^{-1})^{-1},
\end{equation}
which implies that
\begin{equation}
X_{V}f = \chi_{V}R_{+}T_{1}^{-1}(\textrm{Id} - KT_{1}^{-1})^{-1}Jf.
\end{equation}

Writing $X_{V} = I_{\sigma,V} + L_{V}$ and noting that
\begin{equation*}
I_{\sigma,V} f = \chi_{V} R_{+}T_{1}^{-1}Jf,
\end{equation*}
we have that
\begin{equation}
X_{V} = I_{\sigma,V} + \chi_{V} R_{+}(-\textrm{Id} + (\textrm{Id} - T_{1}^{-1}K)^{-1})T_{1}^{-1}J.
\end{equation}
and so we have
\begin{equation}
L_{V} :=  \chi_{V} R_{+}(-\textrm{Id} + (\textrm{Id} - T_{1}^{-1}K)^{-1})T_{1}^{-1}J. \label{LV}
\end{equation}
Furthermore,
\begin{equation}
X_{V}^{*}X_{V} = I_{\sigma, V}^{*}I_{\sigma,V} + \mathcal{L}_{V}, \quad \mathcal{L}_{V} := I_{\sigma,V}^{*}L_{V} + L_{V}^{*}I_{\sigma,V} + L_{V}^{*}L_{V}.
\end{equation}

\begin{lemma} The operators
$$ \partial_{x}I_{\sigma,V}^{*}L_{V}, \quad \partial_{x}L_{V}^{*}I_{\sigma,V}, \quad \partial_{x}L_{V}^{*}L_{V}$$
are compact as operators mapping $L^{2}(\Omega_{1})$ into $L^{2}(\Omega_{1})$. \label{compact}
\end{lemma}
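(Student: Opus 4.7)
The plan is to show that each of the three operators maps $L^2(\Omega_1)$ into a Sobolev space of order strictly greater than $1$; then applying $\partial_x$ produces a map $L^2(\Omega_1) \to H^s(\Omega_1)$ with $s > 0$, and compactness on $L^2(\Omega_1)$ follows from the Rellich--Kondrachov embedding. The algebraic input is the identity $(\mathrm{Id} - T_1^{-1}K)^{-1} - \mathrm{Id} = T_1^{-1}K(\mathrm{Id} - T_1^{-1}K)^{-1}$, which allows me to rewrite (\ref{LV}) as
\[
L_V = \chi_V R_+ T_1^{-1} K (\mathrm{Id} - T_1^{-1}K)^{-1} T_1^{-1} J,
\]
so that every composition $I_{\sigma,V}^* L_V$, $L_V^* I_{\sigma,V}$, $L_V^* L_V$ contains at least one occurrence of the sandwich $T_1^{-1} K T_1^{-1}$, with the bounded operator $(\mathrm{Id} - T_1^{-1}K)^{-1}$ absorbed in the middle.

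First, I would write out the Schwartz kernels of the three operators by unwinding these compositions using the explicit formula (\ref{t1inverse}) for $T_1^{-1}$ together with the expression for $I_{\sigma,V}^*$ derived in Section \ref{inverse}. The kernel of $I_{\sigma,V}^* I_{\sigma,V}$ carries the diagonal singularity $|y-x|^{-(n-1)}$ concentrated on the single direction $\theta = (y-x)/|y-x|$. In each of the three perturbation kernels, by contrast, the angular variable is freed up by $K$: the expression becomes a pair of line integrals (along segments emanating from $x$ and from $y$) joined at an interior point $z$ where $k(z,\theta,\theta')$ acts, so the would-be diagonal singularity is integrated out in an extra dimension. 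This produces a weaker singularity, corresponding to a gain of one derivative over $I_{\sigma,V}^* I_{\sigma,V}$.

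Second, I would import the kernel estimates from the full-data analogue in \cite{inversesource} to conclude that, away from the cutoff, each perturbation kernel is that of an operator mapping $L^2(\Omega_1) \to H^s(\Omega_1)$ with $s > 1$. The presence of $\chi_V$ is harmless: $\chi_V^\#(x,\theta) = \chi_V(x + \tau_+(x,\theta)\theta, \theta)$ is smooth on $\Omega_1 \times \mathbb{S}^{n-1}$ since $\tau_+$ is smooth (by strict convexity of $\Omega_1$) and $\chi_V$ is smooth on $\partial_+ S\Omega_1$, so multiplication by $\chi_V^\#$ is a smooth bounded multiplier that does not destroy the regularity gain. Combined with the fact that $(\mathrm{Id} - T_1^{-1}K)^{-1}$ is a bounded operator on $L^2(\Omega_1 \times \mathbb{S}^{n-1})$ for $(\sigma,k)$ in the open dense set from \cite{inversesource}, this gives the desired $H^{1+\varepsilon}$-regularity of the three operators, from which the compactness assertion follows after applying $\partial_x$.

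The main obstacle will be the kernel analysis that turns the heuristic ``$K$ gains one integration direction'' into an honest gain of a derivative. Concretely, one must perform a change of variables from $(\theta,\theta',s,s')$ to Cartesian coordinates on the two segments and then apply a Schur-type bound to the resulting kernel, checking that the Jacobians remain well-behaved as $x \to y$. This is essentially the computation carried out in \cite{inversesource}; the partial-data adaptation only requires tracking the extra factor $\chi_V^\#$, which, being smooth and compactly supported in $V$, does not contribute any new singularities, either in the interior or at $\partial \Omega_1$.
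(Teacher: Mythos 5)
Your algebraic reduction is the same as the paper's: you rewrite $L_V$ so that every composition contains the factor $T_1^{-1}KT_1^{-1}(\mathrm{Id}-KT_1^{-1})^{-1}$, and your observation that $\chi_V^{\#}$ is a smooth, harmless multiplier is correct. The gap is in the compactness mechanism. You propose to gain strictly more than one derivative (so that applying $\partial_x$ still lands in some $H^{s}$ with $s>0$ and Rellich--Kondrachov applies), but Lemma \ref{compact} is invoked in the proof of Theorem \ref{injectivetheorem}, where $(\sigma,k)$ has only the $C^{2}$-type regularity of (\ref{regularity}) (the paper in fact weakens this further, to the summability condition (\ref{convergence}) for the expansion (\ref{kseries})). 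At that regularity each weakly singular factor gains exactly one derivative starting from $L^{2}$, and iterating the gain from $H^{1}$ to $H^{2}$ requires $C^{4}$-type smoothness of the characteristics (cf.\ Proposition \ref{genreggain}, which needs $\alpha\in C^{2l+2}$ to map $H^{l}\to H^{l+1}$). So the $L^{2}\to H^{1+\varepsilon}$ mapping you want is not available here; it is precisely the content of Proposition \ref{normalremainderreggain}, which is proved later and only for $C^{\infty}$ coefficients in an open dense set. Nor can you ``import'' it from \cite{inversesource}: the full-data analogue there does not establish a two-derivative gain either, but rather the factorization described below.

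What the paper does instead is expand $k=\sum_{j}\Theta_{j}\kappa_{j}$ in spherical harmonics and factor each of the three operators into a series of products of the form $\bigl[\partial_{x} I_{\sigma,V}^{*}\chi_{V}R_{+}T_{1}^{-1}\Theta_{j}J\bigr]\circ\bigl[B_{j}(\mathrm{Id}-KT_{1}^{-1})^{-1}J\bigr]$, where the first factor is merely \emph{bounded} on $L^{2}$ (this is the one-derivative gain that $C^{2}$ regularity does supply, with norm $\lesssim \|\sigma\|_{C^{2}}\|\Theta_{j}\|_{H^{1}}$) and the second factor is \emph{compact} on $L^{2}$ because $B_{j}$ is a weakly singular integral operator on a bounded domain --- compactness that costs no derivatives at all. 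The condition (\ref{convergence}) then gives convergence of the series in operator norm, so the limit is compact. If you are willing to assume $\sigma,k\in C^{\infty}$, your route does work and essentially reproduces Proposition \ref{normalremainderreggain}; but to prove Lemma \ref{compact} at the regularity where it is actually used, you need the bounded-times-compact factorization rather than Rellich.
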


\begin{proof}Following the steps of the proof of Lemma 3 in \cite{inversesource}, first note that
\begin{equation}
(-\textrm{Id} + (\textrm{Id} - T_{1}^{-1}K)^{-1})T_{1}^{-1} = T_{1}^{-1}KT_{1}^{-1}(\textrm{Id} - KT_{1}^{-1})^{-1}.
\end{equation}
To prove it, we note that
\begin{equation*}
T_{1}^{-1}KT_{1}^{-1} =  (-\textrm{Id} + (\textrm{Id} - T_{1}^{-1}K)^{-1})T_{1}^{-1}(\textrm{Id} - KT_{1}^{-1}).
\end{equation*}
Thus $L_{V}$ can be written as
\begin{equation}
L_{V} = \chi_{V}R_{+}T_{1}^{-1}KT_{1}^{-1}(\textrm{Id} - KT_{1}^{-1})^{-1}J.
\end{equation}

We note that multiplication by $\chi_{V}$ to obtain $L_{V}$ from $L$ is bounded and hence preserves compactness. First we need to analyze $I_{\sigma,V}^{*}L_{V} = I_{\sigma,V}^{*}
\chi_{V}R_{+}T_{1}^{-1}KT_{1}^{-1}h$, where $h = h(x,\theta)$. Recall that
\begin{equation*}
[I_{\sigma,V}^{*}h](x)  = \int_{\mathbb{S}^{n-1}}\overline{E}(x,\theta)h^{\#}(x,\theta)\chi_{V}^{\#}(x,\theta)\,d\theta.
\end{equation*}
Again, as in \cite{inversesource} we notice that $\chi_{V}R_{+}T_{1}^{-1}g$ looks like $I_{\sigma,V}$, except that now the source depends on $\theta$ and $x$. Thus
\begin{align}
[I_{\sigma,V}^{*}\chi_{V}R_{+}T_{1}^{-1}g](x) & = \int_{\mathbb{S}^{n-1}}\overline{E}(x,\theta) \left[ \chi_{V}(x,\theta) \int_{-\infty}^{0}E(x+t\theta,\theta)g(x+t\theta,\theta)\,dt \right]^{\#}\,d\theta \notag \\
& = 2 \int_{\Omega_{1}} \frac{ \left[ \overline{E}\left( x, \frac{y-x}{|y-x|} \right) \chi_{V}^{\#}\left( y, \frac{y-x}{|y-x|} \right) E\left( y, \frac{y-x}{|y-x|} \right) g\left( y, \frac{y-x}{|y-x|} \right)\right]_{\mathrm{even}} }{|x-y|^{n-1}}\,dy, \label{term1}
\end{align}
where $F_{\mathrm{even}}(x,\theta)$ is the even part of $F$ with respect to $\theta$ (i.e. $F_{\mathrm{even}}(x, \theta) = \frac{1}{2}( F(x,\theta) + F(x,-\theta))$ ). To get back to $I_{\sigma,V}^{*}L_{V}$, we can let $g = KT_{1}^{-1}h$.

To proceed, we will now make a slightly weaker assumption on $k$ than stated in (\ref{regularity}) (see \cite{inversesource}). We will assume that $k$ can be written as the infinite sum
\begin{equation}
k(x,\theta,\theta') = \sum_{j=1}^{\infty} \Theta_{j}(\theta) \kappa_{j}(x,\theta') \label{kseries}
\end{equation}
where $\Theta_{j}$ and $\kappa_{j}$ are functions such that
\begin{equation}
\sum_{j=1}^{\infty} \|\Theta_{j}\|_{H^{1}(\mathbb{S}^{n-1})} \|\kappa_{j}\|_{L^{\infty}(\Omega_{1} \times \mathbb{S}^{n-1})} < \infty \label{convergence}
\end{equation}
In particular, we could take $\Theta_{j}$ to be the spherical harmonics $Y_{j}$, and then $\kappa_{j}$ would be the corresponding Fourier coefficients in such a basis. As discussed in \cite{inversesource}, uniform convergence of (\ref{kseries}) is guaranteed if $k \in L^{\infty}(\Omega_{1} \times \mathbb{S}_{\theta'}^{n-1}; C_{\theta}^{n+1}(\mathbb{S}^{n-1}))$, which is indeed a weaker assumption. 

Now let $K_{j}$ be the integral operator with kernel $\Theta_{j}\kappa_{j}$ and $B_{j} = \kappa_{j}T_{1}^{-1}$, where we regard $\kappa_{j}$ as integration in $\theta'$ against the kernel $\kappa_{j}$. Thus,
\begin{align}
& [K_{j}T_{1}^{-1}h](x,\theta) = \Theta_{j}(\theta)[B_{j}h](x),\\
& B_{j}h(x) = \int_{\Omega_{1}} \frac{\Sigma\left(x, |x-y|, \frac{x-y}{|x-y|}\right)\kappa_{j}\left( x, \frac{x-y}{|x-y|} \right)}{|x-y|^{n-1}} h\left(y, \frac{x-y}{|x-y|} \right)\,dy. \label{Bj}
\end{align}
By the proof of Lemma 1 in \cite{inversesource}, we have that $B_{j}( \textrm{Id} - KT_{1}^{-1})^{-1}J:L^{2}(\Omega_{1}) \to L^{2}(\Omega_{1})$ is compact. Now observe that
\begin{align}
\partial_{x}I_{\sigma,V}^{*}L_{V} & = \partial_{x}I_{\sigma,V}^{*} \chi_{V}R_{+}T_{1}^{-1}KT_{1}^{-1}(\textrm{Id} - KT_{1}^{-1})^{-1}J \notag \\
& = \sum_{j=1}^{\infty}[\partial_{x}I_{\sigma,V}^{*} \chi_{V}R_{+}T_{1}^{-1}\Theta_{j}J] \left[ B_{j}(\textrm{Id} - KT_{1}^{-1})^{-1}J \right] \label{series2}
\end{align}
By (\ref{term1}) and Proposition 1(b) of \cite{inversesource}, we have that $\partial_{x}I_{\sigma,V}^{*}\chi_{V}R_{+}T_{1}^{-1}\Theta_{j}J:L^{2}(\Omega_{1}) \to L^{2}(\Omega_{1})$ is bounded with a norm bounded above by $C \|\sigma\|_{C^{2}(\overline{\Omega} \times \mathbb{S}^{n-1})} \|\Theta_{j}\|_{H^{1}(\mathbb{S}^{n-1})}$. Thus each summand of (\ref{series2}) is a compact operator with norm bounded above by $C\|\Theta_{j}\|_{H^1} \|\kappa_{j}\|_{L^{\infty}}$, with $C$ depending on $\sigma$. By the condition (\ref{convergence}), we have that $\partial_{x}I_{\sigma,V}^{*}L_{V}$ is compact.

Now, the proof for $\partial_{x}L_{V}^{*}L_{V}$ is similar. In light of the fact that $B_{j}(\textrm{Id} - KT_{1}^{-1})^{-1}J$ is compact, it suffices to show that $\partial_{x}  L_{V}^{*}\chi_{V}R_{+}T_{1}^{-1}J$ is bounded. Note that $KT_{1}^{-1}$ commutes with $(\textrm{Id} - KT_{1}^{-1})^{-1}$, and hence
\begin{align}
L_{V}^{*}\chi_{V}R_{+}T_{1}^{-1}\Theta_{j}J & = \left( \chi_{V}R_{+}T_{1}^{-1}KT_{1}^{-1}(\textrm{Id} - KT_{1}^{-1})^{-1}J \right)^{*} \chi_{V}R_{+}T_{1}^{-1}\Theta_{j}J \label{L*L}\\
& = (KT_{1}^{-1}J)^{*}(\chi_{V}R_{+}T_{1}^{-1}(\textrm{Id} - KT_{1}^{-1})^{-1})^{*}\chi_{V}R_{+}T_{1}^{-1}J. \notag
\end{align}
As proven in \cite{inversesource}, by the boundedness of $\chi_{V}R_{+}T_{1}^{-1}$, the compactness of $\partial_{x}L_{V}^{*}\chi_{V}R_{+}T_{1}^{-1}J$ relies on $\partial_{x}(KT_{1}^{-1}J)^{*}$, and indeed it is.

Finally, to show that $\partial L_{V}^{*}I_{\sigma,V}$ is compact, we can proceed similarly to the case of $\partial L_{V}^{*}L_{V}$. Observe that $\partial_{x} L_{V}^{*}I_{\sigma,V} = L_{V}^{*}\chi_{V}R_{+}T_{1}^{-1}J$, which is equivalent to (\ref{L*L}) with $\Theta_{j} = 1$.\end{proof}

Now we are ready to prove the Theorem \ref{injectivetheorem} regarding the injectivity of $X_{V}$ when restricted to sources $f$ supported compactly in the visible set $\mathcal{M}$.
\begin{proof}[Proof of Theorem \ref{injectivetheorem}] Our proof mostly parallels the proof of Theorem 2 in \cite{inversesource}. By Lemma \ref{compact}, we have that $X_{V}^{*}X_{V}$ is equal to $I_{\sigma,V}^{*}I_{\sigma,V}$ plus a relative compact operator $\mathcal{L}_{V}$. First assume that $\sigma$ and $k$ are $C^{\infty}$. In this case, $I_{\sigma,V}^{*}I_{\sigma,V}$ is elliptic on $\mathcal{M}$, and thus there is a parametrix $Q$ of order $1$ which we view as an operator $Q: H^{1}(\Omega_{1}) \to L^{2}(\Omega)$ (We've restricted the image to $\Omega$, though $\mathcal{M}$ would do based on our assumption on the support of $f$). Thus, for $f$ supported in $W \Subset \mathcal{M}$, we have
\begin{equation}
QI_{\sigma,V}^{*}I_{\sigma,V}f = f + K_{1}f, \label{compact1}
\end{equation}
where $K_{1}$ is of order $-1$ near $\mathcal{M}$. Now apply $Q$ to $X_{V}^{*}X_{V}$ to get
\begin{equation}
QX_{V}^{*}X_{V}f = f + K_{1}f + Q\mathcal{L}_{V}f =: f + K_{2}f. \label{K2}
\end{equation}
By Lemma \ref{compact}, we have that $Q\mathcal{L}_{V}$ is compact. Furthermore, $K_{1}:L^{2}(\mathcal{M}) \to L^{2}(\mathcal{M})$ is compact by Rellich's lemma since it is smoothing near $\mathcal{M}$. This reduces the problem of inverting $X_{V}^{*}X_{V}$ to a Fredholm equation.
By Theorem \ref{Iinjective}, we have that for $\sigma$ real analytic on $\overline{\Omega}\times (\mathbb{S}^{n-1})$, $I_{\sigma,V}$ is injective when restricted to $f$ supported in $W \Subset \mathcal{M}$. 
From this point, the proof follows the same as that for Theorem 2 of \cite{inversesource}, and so we conclude.\end{proof}

\section{A Microlocal Result \label{partialwavefront}}
Although injectivity is a bit much to ask for in the partial data case, it is possible to analyze how singularities are propagated under the normal operator $I_{\sigma,V}^{*}I_{\sigma,V}$. Assuming suitable smoothness conditions on $k$, we will prove that one can partially recover the wavefront set of $f$. This is somewhat analogous to Proposition 1 in \cite{xraygeneric}, which allows one to partially recover the analytic wave front set of $f$ when $k = 0$.

To get more of an intuition for how to proceed, consider the case when $\|T_{1}^{-1}K\| < 1$. Since $X_{V} = \chi_{V}R_{+}T_{1}^{-1}(\textrm{Id} - KT_{1}^{-1})^{-1}J$, we may use a Neumann series expansion to get
\begin{equation}
X_{V} = \chi_{V}R_{+}T_{1}^{-1}\left( \sum_{j=0}^{\infty}(KT_{1}^{-1})^{j} \right)J.
\end{equation}
The term corresponding to $j=0$ is exactly $I_{\sigma,V}$, which does not account for scattering. Subsequent terms in the expansion incorporate scattering of higher and higher orders. So if we can show that $KT_{1}^{-1}$ is smoothing, then the most singular part of the data will be captured in the ballistic term. 

Before proceeding, we need to establish a bit of notation. We define the space $\mathcal{H}_{l}(\Omega \times \mathbb{S}^{n-1})$ as the completion of $C^{\infty}(\Omega \times \mathbb{S}^{n-1})$ with respect to the norm $\| \cdot \|_{\mathcal{H}_{l}(\Omega \times \mathbb{S}^{n-1})}$ given by
\begin{equation}
\|g(x,\theta)\|_{\mathcal{H}_{l}(\Omega \times \mathbb{S}^{n-1})} := \sum_{m=0}^{\infty}\sum_{k=1}^{k_{m,n}}\|a_{m}^{(k)}\|_{H^{l}(\Omega \times \mathbb{S}^{n-1})}\|Y_{m,n}^{(k)}\|_{H^{1}(\mathbb{S}^{n-1})},
\end{equation}
where $g(x,\theta) = \sum_{m=0}^{\infty}\sum_{k=1}^{k_{m,n}}a_{m}^{(k)}(x)Y_{m,n}^{(k)}(\theta)$ is the series representation of $g$ with respect to the spherical harmonics $Y_{m,n}^{(k)}(\theta)$ (see appendix \ref{singularresults}). Similarly, we define the space $\mathcal{C}_{l}(\Omega \times \mathbb{S}^{n-1})$ as the completion of $C^{\infty}(\Omega \times \mathbb{S}^{n-1})$ with respect to the norm $\| \cdot \|_{\mathcal{C}_{l}(\Omega \times \mathbb{S}^{n-1})}$ given by
\begin{equation}
\|g(x,\theta)\|_{\mathcal{C}_{l}(\Omega \times \mathbb{S}^{n-1})} := \sum_{m=0}^{\infty}\sum_{k=1}^{k_{m,n}}\|a_{m}^{(k)}\|_{C^{l}(\Omega)}\|Y_{m,n}^{(k)}\|_{H^{1}(\mathbb{S}^{n-1})}.
\end{equation}

The following lemma establishes the regularizing properties of the operator $KT_{1}^{-1}$.
\begin{lemma}Let $f \in \mathcal{H}_{l}(\Omega_{1} \times \mathbb{S}^{n-1})$ with $\textrm{supp}(f) \subseteq \Omega \times \mathbb{S}^{n-1}$ and $l\geq 1$, and suppose that $\sigma \in C^{\infty}(\overline{\Omega} \times \mathbb{S}^{n-1})$ and $k \in C^{\infty}(\overline{\Omega}_{x} \times \mathbb{S}_{\theta'}^{n-1} \times \mathbb{S}_{\theta}^{n-1})$. Then $KT_{1}^{-1}f \in \mathcal{H}_{l+1}(\Omega_{1} \times \mathbb{S}^{n-1})$. \label{reggain}\end{lemma}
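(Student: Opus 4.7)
The plan is to realize $KT_1^{-1}$ as an explicit spatial singular integral operator whose kernel has a Riesz-type singularity $|x-y|^{-(n-1)}$ (i.e.\ a pseudodifferential operator of order $-1$), and then combine this one-derivative gain with a careful spherical-harmonic bookkeeping matching the $\mathcal{H}_l$ norm. The hypothesis $\mathrm{supp}(f) \subset \Omega \times \mathbb{S}^{n-1}$ with $\Omega \Subset \Omega_1$ is used so that the integration range stays away from $\partial \Omega_1$ and no boundary terms interfere with the Sobolev estimates on $\Omega_1$.

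First, starting from (\ref{t1inverse}) and the definition of $K$, I would write
$$KT_1^{-1}f(x,\theta) = \int_{\mathbb{S}^{n-1}}\int_{-\infty}^{0} k(x,\theta,\theta')\,E_\sigma(x,s,\theta')\,f(x+s\theta',\theta')\,ds\,d\theta',$$
where $E_\sigma$ is the $C^\infty$ attenuation factor $\exp(-\int_{s}^{0}\sigma(x+\tau\theta',\theta')d\tau)$. Applying the change of variables $y = x+s\theta'$ with $s<0$ (Jacobian $|x-y|^{n-1}$), this becomes the spatial singular integral
$$KT_1^{-1}f(x,\theta) = \int_{\Omega_1}\frac{\Phi(x,y,\theta)}{|x-y|^{n-1}}\,f\!\left(y,\omega_{xy}\right)dy,\qquad \omega_{xy} := \frac{x-y}{|x-y|},$$
where $\Phi(x,y,\theta)=k(x,\theta,\omega_{xy})E_\sigma(x,-|x-y|,\omega_{xy})$ is smooth on $\Omega_1\times\Omega_1\times\mathbb{S}^{n-1}$, in exact analogy with the kernel (\ref{Bj}) appearing in $B_j$.

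Second, I would insert the spherical harmonic expansion $f(y,\theta')=\sum_{m,k} a_m^{(k)}(y)Y_{m,n}^{(k)}(\theta')$ and also expand the output $KT_1^{-1}f(x,\theta)=\sum_{p,q} b_p^{(q)}(x)Y_{p,n}^{(q)}(\theta)$. The coefficients take the form
$$b_p^{(q)}(x) = \sum_{m,k}\int_{\Omega_1}\frac{\Psi_{p,m}^{(q,k)}(x,\omega_{xy})}{|x-y|^{n-1}}\,a_m^{(k)}(y)\,dy,$$
where $\Psi_{p,m}^{(q,k)}(x,\omega) = \bigl\langle \Phi(x,\cdot)|_{\omega_{xy}=\omega},\, Y_{p,n}^{(q)}\bigr\rangle_{L^2(\mathbb{S}^{n-1})}\,Y_{m,n}^{(k)}(\omega)$ is a smooth function of $(x,\omega)$. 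Each summand is a classical generalized Riesz potential with smooth amplitude, so by the singular-integral results gathered in Appendix \ref{singularresults}, it maps $H^l(\Omega_1)\to H^{l+1}(\Omega_1)$ with norm controlled by some fixed Sobolev norm of the amplitude; the smoothness of $\sigma$ and $k$ converts the $Y_{p,n}^{(q)}$-inner product into a factor bounded by $\|Y_{p,n}^{(q)}\|_{H^1(\mathbb{S}^{n-1})}$ up to a constant depending on $\sigma$ and $k$, and the $Y_{m,n}^{(k)}(\omega_{xy})$ factor contributes $\|Y_{m,n}^{(k)}\|_{H^1(\mathbb{S}^{n-1})}$.

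Finally, summing the resulting bounds
$$\|b_p^{(q)}\|_{H^{l+1}(\Omega_1)} \leq C_{\sigma,k}\,\|Y_{p,n}^{(q)}\|_{H^1}\sum_{m,k}\|Y_{m,n}^{(k)}\|_{H^1}\,\|a_m^{(k)}\|_{H^l(\Omega_1)}$$
and invoking the definition of the $\mathcal{H}_{l+1}$ and $\mathcal{H}_l$ norms would give $\|KT_1^{-1}f\|_{\mathcal{H}_{l+1}(\Omega_1\times\mathbb{S}^{n-1})} \leq C_{\sigma,k}\|f\|_{\mathcal{H}_l(\Omega_1\times\mathbb{S}^{n-1})}$. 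The main obstacle is precisely this last bookkeeping step: the Appendix estimates must factor the norm of $\Psi_{p,m}^{(q,k)}$ cleanly into a product of $\|Y_{p,n}^{(q)}\|_{H^1}\|Y_{m,n}^{(k)}\|_{H^1}$ and a factor that is summable in $(p,q)$ after pairing. This is essentially where the $C^\infty$ hypothesis on $k$ in the $\theta'$-variable is fully exploited---by integrating by parts $N$ times in $\theta$ against the Laplace--Beltrami operator one trades powers of $(1+p)$ (from the eigenvalue of $Y_{p,n}^{(q)}$) for $\theta$-derivatives of $k$, producing the decay needed to make the sum over $(p,q)$ converge absolutely, independently of how rough $\|Y_{p,n}^{(q)}\|_{H^1}$ grows with $p$.
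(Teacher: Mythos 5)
Your proposal is correct and follows essentially the same route as the paper: the paper's proof likewise rewrites $KT_{1}^{-1}f$ as the weakly singular integral with kernel $\Sigma\bigl(x,|x-y|,\tfrac{x-y}{|x-y|}\bigr)k\bigl(x,\theta,\tfrac{x-y}{|x-y|}\bigr)|x-y|^{-(n-1)}$ acting on $f\bigl(y,\tfrac{x-y}{|x-y|}\bigr)$ and then invokes Proposition \ref{genreggain}(d), whose proof is exactly your double spherical-harmonic bookkeeping (expanding the kernel in the outgoing direction and $f$ in its angular variable, applying the one-derivative gain termwise, and using the rapid decay of the coefficients of the smooth kernel to sum). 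The only difference is organizational: the paper delegates your ``main obstacle'' paragraph to the Appendix, where the summability is encoded in the $\mathcal{C}_{2l+2}$ norm and the convergence claim (\ref{specialnormbound1}).
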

\begin{proof}First let us recall from the proof of Lemma 1 in \cite{inversesource} that
\begin{equation}
[KT_{1}^{-1}f](x,\theta) = \int_{\Omega} \frac{\Sigma\left(x, |x-y|, \frac{x-y}{|x-y|}\right)k\left(x,\theta, \frac{x-y}{|x-y|}\right)}{|x-y|^{n-1}}f\left(y, \frac{x-y}{|x-y|}\right)\,dy,
\end{equation}
where $\Sigma(x,s,\theta') = \exp\left( -\int_{-s}^{0} \sigma(x + \tau\theta',\theta')\,d\tau\right)$. The characteristic $\Sigma(x,|x-y|,\theta')k(x,\theta, \frac{x-y}{|x-y|})$ satisfies the hypotheses of Proposition \ref{genreggain}, and the result follows.\end{proof}

\begin{corollary}Suppose that $\sigma \in C^{\infty}(\overline{\Omega} \times \mathbb{S}^{n-1})$ and $k \in C^{\infty}(\overline{\Omega}_{x} \times \mathbb{S}_{\theta'}^{n-1} \times \mathbb{S}_{\theta}^{n-1})$. Then $(KT_{1}^{-1})^{j}Jf:L^{2}(\Omega) \to \mathcal{H}_{j}(\Omega_{1} \times \mathbb{S}^{n-1})$ for all $j \geq 0$.\end{corollary}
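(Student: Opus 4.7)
The plan is to induct on $j$. For the base case $j = 0$, since $Jf(x,\theta) = f(x)$ is constant in $\theta$, its spherical harmonic expansion reduces to the single $(m,k) = (0,1)$ term with coefficient proportional to $f$, so $\|Jf\|_{\mathcal{H}_0(\Omega_1 \times \mathbb{S}^{n-1})}$ is controlled by a constant multiple of $\|f\|_{L^2(\Omega)}$. Hence $J: L^2(\Omega) \to \mathcal{H}_0(\Omega_1 \times \mathbb{S}^{n-1})$ is bounded, which is exactly the $j = 0$ claim.

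For the inductive step I would assume $u_j := (KT_1^{-1})^j Jf \in \mathcal{H}_j(\Omega_1 \times \mathbb{S}^{n-1})$ and deduce $u_{j+1} \in \mathcal{H}_{j+1}$. For $j \geq 1$, Lemma \ref{reggain} applied with $l = j$ gives this immediately, provided I first note that the support hypothesis in that lemma can be relaxed from ``supported in $\Omega$'' to ``supported in $\Omega_1$'' without altering the proof: its only role is to localize the $y$-integration in the formula for $KT_1^{-1}$ to a bounded set, and $\Omega_1$ is already bounded. In particular, the potential failure of $KT_1^{-1}$ to preserve the support-in-$\Omega$ property does not obstruct the iteration.

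The hard part will be the transition from $j = 0$ to $j = 1$, since Lemma \ref{reggain} as stated requires $l \geq 1$ and so is not directly applicable to the input $Jf \in \mathcal{H}_0$. For this initial step I would argue directly from the integral formula
\[
[KT_1^{-1} Jf](x,\theta) = \int_\Omega \frac{\Sigma\bigl(x, |x-y|, \tfrac{x-y}{|x-y|}\bigr)\, k\bigl(x, \theta, \tfrac{x-y}{|x-y|}\bigr)}{|x-y|^{n-1}}\, f(y)\, dy,
\]
which is a weakly singular Riesz-type integral operator in $x$ with smooth $\theta$-dependent characteristic inherited from $k \in C^\infty$. Mapping estimates of the form established in Proposition \ref{genreggain}, checked to go through at input degree $l = 0$ because $f$ depends only on $x$ so the angular regularity is carried entirely by the smooth kernel, deliver $KT_1^{-1} Jf \in \mathcal{H}_1$. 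Once this initial regularity gain is in place, iteration of Lemma \ref{reggain} is purely mechanical and completes the proof for all $j \geq 0$.
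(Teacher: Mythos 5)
Your proof is correct and follows the route the paper intends: the corollary is stated without proof as an immediate iteration of Lemma \ref{reggain}, which is exactly your induction, with the base case handled by the trivial boundedness of $J:L^{2}(\Omega)\to\mathcal{H}_{0}(\Omega_{1}\times\mathbb{S}^{n-1})$. Your extra care with the two edge cases --- the $l\geq 1$ restriction in Lemma \ref{reggain} (circumvented by invoking Proposition \ref{genreggain} directly at input degree $l=0$, where its parts (a) and (d) are valid) and the fact that $KT_{1}^{-1}$ need not preserve support in $\Omega$ (harmless because the relevant kernels remain compactly supported in $\overline{\Omega}_{1}$, which is all Proposition \ref{genreggain} requires) --- supplies details the paper leaves implicit.
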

From this result, we see that in the case that $\|KT_{1}^{-1}\| < 1$, $X_{V}f$ is equal to $I_{\sigma, V}f$ plus a remainder consisting of a series of terms with successively higher regularity, corresponding to higher order scattering.

\begin{lemma} Suppose that $\sigma \in C^{\infty}(\overline{\Omega} \times \mathbb{S}^{n-1})$ and $k \in C^{\infty}(\overline{\Omega} \times \mathbb{S}^{n-1} \times \mathbb{S}^{n-1})$. Then $KT_{1}^{-1}K:H^{l}(\Omega_{1} \times \mathbb{S}^{n-1}) \to H^{l}(\Omega_{1} \times \mathbb{S}^{n-1})$ is compact for all $l \geq 0$. \label{compactness}\end{lemma}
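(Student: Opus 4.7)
The plan is to show that $KT_{1}^{-1}K$ gains one full Sobolev derivative, so that compactness on $H^{l}(\Omega_{1} \times \mathbb{S}^{n-1})$ follows from the compact embedding $H^{l+1} \hookrightarrow H^{l}$ on the compact set $\overline{\Omega_{1}} \times \mathbb{S}^{n-1}$ via Rellich-Kondrachov.

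First, I will write an explicit integral kernel for $KT_{1}^{-1}K$. Composing the definition of $K$ with formula (\ref{t1inverse}) and then with $K$ again, and performing the polar change of variables $y = x + s\theta'$ with $s < 0$ as in the derivation of (\ref{Bj}) (so that $\theta' = (x-y)/|x-y|$ and $ds\,d\theta' = |x-y|^{-(n-1)}\,dy$), the operator takes the form
\begin{equation*}
[KT_{1}^{-1}Kf](x,\theta) = \int_{\Omega_{1}} \int_{\mathbb{S}^{n-1}} \frac{k\!\left(x,\theta,\omega\right)\,\Sigma\!\left(x,|x-y|,\omega\right)\,k\!\left(y,\omega,\theta''\right)}{|x-y|^{n-1}} f(y,\theta'')\,d\theta''\,dy,
\end{equation*}
with $\omega = (x-y)/|x-y|$ and $\Sigma$ as in the proof of Lemma \ref{reggain}. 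The amplitude is smooth in $\theta$ and $\theta''$; the only singularity is the weakly integrable $|x-y|^{-(n-1)}$ factor, together with the directional singularity of $\omega$ at $y=x$.

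Second, I claim that the spatial part of this operator gains one derivative in the Sobolev scale. The amplitude $k(x,\theta,\omega)\,\Sigma(x,|x-y|,\omega)\,k(y,\omega,\theta'')$ satisfies the hypotheses of Proposition \ref{genreggain} (the result that was already invoked in the proof of Lemma \ref{reggain}), applied now with the two extra smooth parameters $\theta$ and $\theta''$. Since $\sigma$ and $k$ are $C^{\infty}$, any number of $\theta$ and $\theta''$ derivatives of the kernel remain of the same form, and Proposition \ref{genreggain} applies uniformly. Combining the spatial gain of one derivative with the smoothness in the angular variables yields the mapping $KT_{1}^{-1}K : H^{l}(\Omega_{1}\times \mathbb{S}^{n-1}) \to H^{l+1}(\Omega_{1} \times \mathbb{S}^{n-1})$ for every $l \geq 0$. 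Alternatively, one can identify $KT_{1}^{-1}K$ as a classical $\Psi$DO of order $-1$ in $x$ with smooth angular kernel, parallel to the derivation of (\ref{normalsymbol}) for $I_{\sigma,V}^{*}I_{\sigma,V}$, from which the Sobolev gain at every scale is automatic. Composing with the compact embedding $H^{l+1} \hookrightarrow H^{l}$ then completes the proof.

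The main technical obstacle is verifying this gain of a derivative at higher Sobolev orders. Differentiating the kernel in $x$ produces extra factors of $|x-y|^{-1}$ coming from $\omega = (x-y)/|x-y|$, which naively worsens the singularity beyond integrability. The resolution is either to exploit the cancellation of odd-in-$\omega$ contributions (the "even part" mechanism already appearing in (\ref{term1}) and in the proof of Lemma \ref{compact}), which directly extends the Lemma 1 argument of \cite{inversesource} to $l \geq 1$, or to work inside the pseudodifferential calculus, where boundedness on every Sobolev scale follows once the operator has been identified as a $\Psi$DO of order $-1$. Either route handles the apparent worsening of the singularity and delivers the key estimate $KT_{1}^{-1}K : H^{l} \to H^{l+1}$, from which the stated compactness follows immediately.
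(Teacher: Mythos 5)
Your proposal is correct and follows essentially the same route as the paper: write $KT_{1}^{-1}K$ as a weakly singular integral operator with a smooth, compactly supported amplitude $\alpha(x,y,|x-y|,\tfrac{x-y}{|x-y|},\theta,\theta')$, invoke Proposition \ref{genreggain}(a) to gain one $x$-derivative (with angular smoothness inherited from $\alpha$), and conclude compactness from the embedding $H^{l+1}\hookrightarrow H^{l}$ via Rellich. The ``technical obstacle'' you flag --- the extra $|x-y|^{-1}$ factors produced by differentiating $\omega=(x-y)/|x-y|$ --- is precisely what the proof of Proposition \ref{genreggain} already resolves through the Mikhlin singular integral machinery, so no separate argument is needed.
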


\begin{proof}Recall from the proof of Lemma 2 in \cite{inversesource} that
\begin{equation}
[KT_{1}^{-1}Kf](x,\theta) = \int \int_{\Omega_{1} \times \mathbb{S}^{n-1}} \frac{\alpha\left(x,y,|x-y|,\frac{x-y}{|x-y|}, \theta, \theta' \right)}{|x-y|^{n-1}}f(y,\theta')\,dy\,d\theta'
\end{equation}
with some $C^{\infty}$ $\alpha$ compactly supported in $x$ and $y$. The integral in $y$ is a weakly singular integral of the form in Proposition \ref{genreggain}, and so by part (a) we gain a derivative in $x$ for each fixed $\theta'$. Moreover, the smoothness in $\theta$ of $KT_{1}^{-1}Kf(x,\theta)$ is dependent only on the smoothness of $\alpha$. Therefore, $KT_{1}^{-1}K: H^{l}(\Omega_{1} \times \mathbb{S}^{n-1}) \to H^{l+1}(\Omega_{1} \times \mathbb{S}^{n-1})$. By Rellich's Lemma, the inclusion $H^{l+1}(\Omega_{1} \times \mathbb{S}^{n-1}) \hookrightarrow H^{l}(\Omega_{1} \times \mathbb{S}^{n-1})$ is compact, which completes the proof.\end{proof}

Again suppose that $\sigma$ and $k$ are $C^{\infty}$. Since $L_{V} =  \chi_{V}R_{+}T_{1}^{-1}KT_{1}^{-1}(\textrm{Id} - KT_{1}^{-1})^{-1}J$, we have by Lemma \ref{reggain} that $L_{V}:H^{l}(\Omega) \to \mathcal{H}_{l+1}(\Omega_{1} \times \mathbb{S}^{n-1})$. Since $X_{V}^{*}X_{V} = I_{\sigma,V}^{*}I_{\sigma,V} + \mathcal{L}_{V}$ and for smooth $\sigma$, $I_{\sigma,V}^{*}I_{\sigma,V}$ is a pseudodifferential operator of order $-1$, we would like to show that $\mathcal{L}_{V}$ maps $H^{l}(\Omega)$ into $H^{l+2}(\Omega_{1})$. We have the following proposition:
\begin{proposition}Let $l$ be a positive integer. There exists an open dense set $\mathcal{O}_{l}$ of pairs $(\sigma, k) \in C^{\infty}(\overline{\Omega} \times \mathbb{S}^{n-1}) \times C^{\infty}(\overline{\Omega}_{x} \times \mathbb{S}_{\theta'}^{n-1} \times S_{\theta}^{n-1})$ depending on $l$ such that for all $0 \leq l' \leq \frac{l}{2}$, the operator 
\begin{equation}
\mathcal{L}_{V} = I_{\sigma,V}^{*}L_{V} + L_{V}^{*}I_{\sigma,V} + L_{V}^{*}L_{V} = I_{\sigma,V}^{*}L_{V} + L_{V}^{*}X_{V}
\end{equation}
maps $H^{l'}(\Omega)$ into $H^{l'+2}(\Omega_{1})$. Moreover, we can write $\mathcal{L}_{V} = F + R$ where $F$ is a pseudodifferential operator of order $-2$, and $R:L^{2}(\Omega) \to H^{l}(\Omega)$. \label{normalremainderreggain}\end{proposition}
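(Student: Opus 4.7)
The plan is to split $L_V$ via a finite Neumann expansion of $(\text{Id}-KT_1^{-1})^{-1}$, identify the contribution of the single-scattering term to $\mathcal{L}_V$ as a classical $\Psi$DO $F$ of order $-2$, and absorb the higher-scattering-order terms into the smoothing remainder $R$. Concretely, for $N=N(l)$ to be chosen, use
\[
(\text{Id}-KT_1^{-1})^{-1}=\sum_{j=0}^{N-1}(KT_1^{-1})^j+(KT_1^{-1})^N(\text{Id}-KT_1^{-1})^{-1}
\]
to write $L_V = L_V^{(1)}+\sum_{j=2}^{N}L_V^{(j)}+L_V^{\mathrm{rem}}$ with $L_V^{(j)}:=\chi_V R_+T_1^{-1}(KT_1^{-1})^jJ$. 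Substituting into $\mathcal{L}_V=I_{\sigma,V}^*L_V+L_V^*X_V$ produces a finite list of terms to analyze; the single pair $I_{\sigma,V}^*L_V^{(1)}+(L_V^{(1)})^*I_{\sigma,V}$ will become $F$, and all other terms will be absorbed into $R$.

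For every term containing at least two consecutive $KT_1^{-1}$ factors --- all $L_V^{(j)}$ with $j\ge 2$, the Neumann remainder, and every composition $(L_V^{(i)})^*L_V^{(j)}$ with $i,j\ge 1$ --- iterate Lemma \ref{reggain}: each $KT_1^{-1}$ gains one derivative in the $\mathcal{H}_l$-scale, and the boundedness of $I_{\sigma,V}^*$ and $X_V$ on these spaces (implicit from (\ref{term1})) transfers the gain to $H^l(\Omega_1)$. Choosing $N$ large enough, these contributions combine into an operator $R:L^2(\Omega)\to H^l(\Omega)$. The restriction $l'\le l/2$ is a limitation of this iteration: the $\mathcal{H}_{l'}$-operator norm of $(\text{Id}-KT_1^{-1})^{-1}$ degrades with $l'$, so the iterated estimate closes only up to $l'\approx l/2$. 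The open dense set $\mathcal{O}_l$ is constructed as in Theorem 2 of \cite{inversesource}: the analytic Fredholm theorem applied to the resolvent (\ref{resolvent2}) gives invertibility of $\text{Id}-KT_1^{-1}$ off a discrete set, and restricting to pairs $(\sigma,k)$ with resolvent bounds sufficient to carry out the $N(l)$-step Neumann truncation uniformly in the coefficients yields the required open dense set.

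What remains is to show $F_0:=I_{\sigma,V}^*L_V^{(1)}+(L_V^{(1)})^*I_{\sigma,V}$ is a classical $\Psi$DO of order $-2$. Inserting the explicit kernel of $KT_1^{-1}$ (from the proof of Lemma \ref{reggain}) into formula (\ref{term1}) yields, for $f\in L^2(\Omega)$,
\[
I_{\sigma,V}^*L_V^{(1)}f(x)=\int_{\Omega_1}\int_{\Omega_1}\frac{\beta(x,y,z)}{|x-y|^{n-1}|y-z|^{n-1}}\,f(z)\,dy\,dz
\]
with smooth $\beta$ built from $E$, $\chi_V^\#$, $\Sigma$, and $k$. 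Performing the intermediate $y$-integration via a change of variables aligned with $(z-x)/|z-x|$ should yield an $(x,z)$-kernel of the form $\alpha(x,z,(z-x)/|z-x|)/|x-z|^{n-2}$ modulo smoother terms, and Lemma 4.2 of \cite{xraygeneric} then realizes $F_0$ as a classical $\Psi$DO of order $-2$. The main obstacle will be exactly this intermediate integration: extracting the $|x-z|^{-(n-2)}$ diagonal behavior together with a classical amplitude structure from the product of two $|x-y|^{-(n-1)}$-type singularities, uniformly in the support of $\chi_V^\#$, is the technical heart of the argument and is sensitive to the cancellations hidden in the even-part $[\cdot]_{\mathrm{even}}$ appearing in (\ref{term1}).
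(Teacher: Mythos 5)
There is a genuine gap in your bookkeeping of the higher-order scattering terms. A term such as $I_{\sigma,V}^{*}L_{V}^{(j)}$ with $2\leq j\leq N-1$ contains exactly $j$ factors of $KT_{1}^{-1}$; iterating Lemma \ref{reggain} and then applying Proposition \ref{genreggain} to the outer integral gains only about $j+1$ derivatives, so it maps $L^{2}(\Omega)$ into $H^{j+1}$, not into $H^{l}$. For $2\leq j\lesssim l$ these terms (and likewise the cross terms $(L_{V}^{(i)})^{*}L_{V}^{(j)}$) can therefore \emph{not} be absorbed into a remainder $R:L^{2}(\Omega)\to H^{l}(\Omega)$, contrary to your plan; they must be retained inside $F$ as pseudodifferential contributions of orders $-3,-4,\dots$ (harmless for the claim that $F$ has order $-2$, but then each of them needs the same classical-$\Psi$DO analysis you only attempt for $j=1$). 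The paper sidesteps this entirely: it shows each $(KT_{1}^{-1})^{j}J$ is a classical $\Psi$DO of order $-j$ with $\theta$ as a smooth parameter, forms the asymptotic sum (\ref{asymptoticseries}) to build a parametrix $\widetilde{F}$ of order $0$ for $(\textrm{Id}-KT_{1}^{-1})^{-1}J$ whose error $\widetilde{R}$ is genuinely smoothing, and then obtains $F$ by composing within the calculus, where the order $-2$ is automatic.

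The second problem is the step you yourself flag as the ``technical heart'': extracting a classical amplitude with $|x-z|^{-(n-2)}$ singularity from the composition of two $|x-y|^{-(n-1)}$ kernels is left unproved, yet it is exactly the content of the claim that $F$ is a $\Psi$DO of order $-2$ (and for $n=2$ the composition produces a logarithmic, not a power, singularity, so your proposed normal form is not even correct there). The paper never performs this kernel composition by hand; it relies on Proposition \ref{genreggain} and Lemma 4.2 of \cite{xraygeneric} to identify each factor separately as a classical $\Psi$DO and composes operators, not kernels. Finally, the first assertion of the proposition (that $\mathcal{L}_{V}:H^{l'}\to H^{l'+2}$) rests on the boundedness of $(\textrm{Id}-KT_{1}^{-1})^{-1}$ on each Sobolev level $H^{l'}(\Omega_{1}\times\mathbb{S}^{n-1})$, obtained from Lemma \ref{compactness} and the analytic Fredholm theorem applied level by level, with $\mathcal{O}_{l}$ the complement of the union of the resulting discrete pole sets; your appeal to ``resolvent bounds sufficient to carry out the truncation'' needs to be made precise at exactly this point, and your heuristic that the restriction $l'\leq l/2$ comes from degradation of the resolvent norm does not reflect how that restriction actually arises.
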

\begin{proof}First we write
\begin{align}
\mathcal{L}_{V}f & = I_{\sigma,V}^{*}L_{V}f + L_{V}^{*}X_{V}f \notag\\
& = \left( \chi_{V}R_{+}T_{1}^{-1}J\right)^{*}\chi_{V}R_{+}T_{1}^{-1}KT_{1}^{-1}(\textrm{Id} - KT_{1}^{-1})^{-1}Jf \notag \\
& \quad + \left( \chi_{V}R_{+}T_{1}^{-1}KT_{1}^{-1}(\textrm{Id} - KT_{1}^{-1})^{-1}J \right)^{*}\left(  \chi_{V}R_{+}T_{1}^{-1}(\textrm{Id} - KT_{1}^{-1})^{-1}J \right)f \label{LVexpression} \\
& =: I_{1}f + I_{2}f. \notag
\end{align}
Given equation (\ref{term1}), by Proposition \ref{genreggain} we have that $ \left( \chi_{V}R_{+}T_{1}^{-1}J\right)^{*}\chi_{V}R_{+}T_{1}^{-1}$ maps $\mathcal{H}_{l'+1}(\Omega_{1} \times \mathbb{S}^{n-1})$ into $H^{l'+2}(\Omega_{1})$ for all $l' \geq 0$.

We claim that for an open dense set of $(\sigma,k) \in C^{\infty}\times C^{\infty}$, $(\textrm{Id} - KT_{1}^{-1})^{-1}J$ maps $H^{l'}(\Omega_{1})$ to $\mathcal{H}_{l'}(\Omega_{1} \times \mathbb{S}^{n-1})$ for all $0 \leq l' \leq l+1$. First note that by Lemma \ref{compactness} $\lambda(KT_{1}^{-1})^{2}:H^{l'}(\Omega_{1} \times \mathbb{S}^{n-1}) \to H^{l'}(\Omega_{1} \times \mathbb{S}^{n-1})$ is compact for all $l' \geq 0$. Using the analytic Fredholm theorem on the resolvent $A(\lambda)$ with (\ref{resolvent2}) and (\ref{resolvent1}), we conclude that $(\textrm{Id} - \lambda KT_{1}^{-1})^{-1}$ exists and is bounded on $H^{l'}(\Omega_{1} \times \mathbb{S}^{n-1})$ for all $\lambda$ in some complex neighborhood of $[0,1]$ except for possibly a discrete set, which depends on $l'$. Taking the complement of the union of all such discrete sets for $0 \leq l' \leq l+1$, we obtain that $(\textrm{Id} - KT_{1}^{-1})^{-1}$ is bounded on each $H^{l'}(\Omega_{1} \times \mathbb{S}^{n-1})$ for all $0 \leq l' \leq l+1$ and for all but a discrete set of $\lambda$. So the set of pairs $(\sigma, k) \in C^{\infty} \times C^{\infty}$ for which (\ref{transport}) has a unique solution and $(\textrm{Id} - KT_{1}^{-1})^{-1}: H^{l'}(\Omega_{1} \times \mathbb{S}^{n-1}) \to H^{l'}(\Omega_{1} \times \mathbb{S}^{n-1})$ is bounded for $0 \leq l' \leq l+1$, is open and dense. Now, we just apply Lemma \ref{reggain} to the $KT_{1}^{-1}$ factor in $I_{1}$, which shows that $I_{1}$ maps $H^{l'}(\Omega)$ into $H^{l'+2}(\Omega_{1})$ for $0 \leq l' \leq l$.

To analyze $I_{2}$, we will use the series expansion of $k(x,\theta,\theta')$ previously defined in (\ref{kseries}). Observe that
\begin{align}
I_{2}f & = \left( \sum_{j=1}^{\infty} \chi_{V}R_{+}T_{1}^{-1}K_{j}T_{1}^{-1}(\textrm{Id} - KT_{1}^{-1})^{-1}J \right)^{*} \left( \chi_{V} R_{+}T_{1}^{-1}(\textrm{Id} - KT_{1}^{-1})^{-1}J \right) \notag\\
& = \left( \sum_{j=1}^{\infty} \left[\chi_{V}R_{+}T_{1}^{-1}\Theta_{j}J\right] \left[ B_{j}(\textrm{Id} - KT_{1}^{-1})^{-1}J\right] \right)^{*} \left( \chi_{V} R_{+}T_{1}^{-1}(\textrm{Id} - KT_{1}^{-1})^{-1}J\right) \notag\\
& = \sum_{j=1}^{\infty} \left( B_{j}(\textrm{Id} - KT_{1}^{-1})^{-1}J\right)^{*}\left( \chi_{V}R_{+}T_{1}^{-1}\Theta_{j}J \right)^{*}( \chi_{V} R_{+}T_{1}^{-1}(\textrm{Id} - KT_{1}^{-1})^{-1}J). \label{I2}
\end{align}
Similar to (\ref{term1}), we can compute
\begin{align}
& \left( \chi_{V}R_{+}T_{1}^{-1}\Theta_{j} J\right)^{*}\chi_{V} R_{+}T_{1}^{-1}g(x) \notag\\
 = \quad & 2 \int_{\Omega_{1}} \frac{ \left[ \overline{E}\left( x, \frac{y-x}{|y-x|} \right) \chi_{V}^{\#}\left( y, \frac{y-x}{|y-x|} \right) \Theta_{j}\left( \frac{y-x}{|y-x|}\right)E\left( y, \frac{y-x}{|y-x|} \right) g\left( y, \frac{y-x}{|y-x|} \right)\right]_{\mathrm{even}} }{|x-y|^{n-1}}\,dy.
\end{align}
By Proposition \ref{genreggain}, it is then evident that $\left( \chi_{V}R_{+}T_{1}^{-1}\Theta_{j} J\right)^{*}\chi_{V} R_{+}T_{1}^{-1}$ maps $\mathcal{H}_{l'}(\Omega \times \mathbb{S}^{n-1})$ into $H^{l'+1}(\Omega)$ for any $l' \geq 0$. Applying Proposition \ref{genreggain} to (\ref{Bj}) gives that $B_{j}(\textrm{Id} - KT_{1}^{-1})^{-1}J$ maps $H^{l'+1}(\Omega_{1})$ to $H^{l'+2}(\Omega_{1})$ for $0 \leq l' \leq l$. Therefore, so does its adjoint. Altogether, we have that $I_{2}f \in H^{l'+2}(\Omega_1)$ for $f \in H^{l'}(\Omega)$ where $0 \leq l' \leq l$.

For the next part of the proposition, note that for $m \geq 1$
\begin{equation*}
(\textrm{Id} - KT_{1}^{-1})\sum_{j=0}^{m}(KT_{1}^{-1})^{j}J = J -(KT_{1}^{-1})^{m+1}J.
\end{equation*}
By Lemma \ref{reggain} each summand $(KT_{1}^{-1})^{j}J$ is a pseudodifferential operator of order $-j$ with symbol depending smoothly on the parameter $\theta$. Therefore, we may construct a symbol
\begin{equation}
\rho(x,\xi,\theta) \sim \sum_{j=0}^{\infty}\sigma_{L}( (KT_{1}^{-1})^{j}J)(x,\xi,\theta), \label{asymptoticseries}
\end{equation}
where $\sigma_{L}: L_{1,0}^{-j}(\Omega) \to S^{-j}_{1,0}(\Omega \times \rn{n})$ is the full left symbol map. Here we are using the notation $L_{\delta, \rho}^{m}(\Omega)$ to refer to pseudodifferential operators with symbols in the class $S_{\delta, \rho}^{m}(\Omega \times \rn{n})$ (see \cite{grigis}). The symbol $\rho$ corresponds to a pseudodifferential operator $\widetilde{F}$ of order $0$ with smooth parameter $\theta$, and we have
\begin{align}
& (\textrm{Id} - KT_{1}^{-1}) \circ \widetilde{F} = J + \widetilde{R}_{0}, \quad \widetilde{R}_{0} \in (L_{1,0}^{-\infty}(\Omega); C^{\infty}(\mathbb{S}^{n-1})) \notag\\
& \widetilde{F} - \widetilde{R} = (\textrm{Id} - KT_{1}^{-1})^{-1}J \label{pseudoinverse} \\
 & \widetilde{R} = (\textrm{Id} - KT_{1}^{-1})^{-1}\widetilde{R}_{0}:L^{2}(\Omega) \to H^{l}(\Omega; C^{\infty}(\mathbb{S}^{n-1})).\notag
\end{align}
Substituting (\ref{pseudoinverse}) into the expression  (\ref{LVexpression}) for $\mathcal{L}_{V}$ gives that
\begin{align}
\mathcal{L}_{V} & = \Big[ \left( \chi_{V}R_{+}T_{1}^{-1}J\right)^{*}\chi_{V}R_{+}T_{1}^{-1}KT_{1}^{-1}\widetilde{F} +  \left( \chi_{V}R_{+}T_{1}^{-1}KT_{1}^{-1}\widetilde{F} \right)^{*}\left(  \chi_{V}R_{+}T_{1}^{-1}\widetilde{F} \right) \Big]+ R \notag\\
& =:  F + R, \label{LVpseudoremain}
\end{align}
where $R$ involves all the terms with $\widetilde{R}$. It is then immediate that $F$ is a pseudodifferential operator of order $-2$ while $R$ maps $L^{2}(\Omega) \to H^{l}(\Omega_{1})$.
\end{proof}

For reference, given an operator $A \in L_{1,0}^{m}(\Omega)$, we define $\textrm{WF}(A)$ as the smallest closed cone $\mathcal{C} \subset T^{*}\Omega \setminus 0$ such that $\sigma_{A} \vert_{\mathcal{C}^{c}} \in S^{-\infty}(\mathcal{C}^{c})$, where $\sigma_{A}$ is the symbol of $A$. We are now ready to prove the main theorem:
\begin{proof}[Proof of Theorem \ref{microtheorem}] Assume first that $(\sigma, k) \in C^{\infty} \times C^{\infty}$ is in the same open, dense set as in Proposition \ref{normalremainderreggain}. Since $\sigma$ is $C^{\infty}(\overline{\Omega} \times \mathbb{S}^{n-1})$, we have that $I_{\sigma,V}^{*}I_{\sigma,V}$ is a pseudodifferential operator of order $-1$. Furthermore, it is elliptic on $N^{*}l(x_{0},\theta_{0})$ by (\ref{microvisibleset}). Let $(z,\xi) \in N^{*}l(x_{0},\theta_{0})$. Then there exists a microlocal parametrix $Q \in L_{1,0}^{1}(\Omega_{1})$ elliptic at $(z,\xi)$ and $S_{1} \in L_{1,0}^{0}(\Omega)$ such that
\begin{equation*}
Q I_{\sigma,V}^{*}I_{\sigma,V} = \textrm{Id} + S_{1},
\end{equation*}
and $(z,\xi) \notin \textrm{WF}(S_{1})$. We will also restrict the image of $Q$ so that $Q:H^{1}(\Omega_{1}) \to L^{2}(\Omega)$. Since $\textrm{WF}(S_{1}f) \subset \textrm{WF}(S_{1}) \cap \textrm{WF}(f)$ (e.g. by Lemma 7.2 of \cite{grigis}), we have that $S_{1}f$ is microlocally smooth near $(z,\xi)$, i.e. $(z,\xi) \notin \textrm{WF}(S_{1}f)$.

Now we apply $Q$ to the normal operator $X_{V}^{*}X_{V} = I_{\sigma,V}^{*}I_{\sigma,V} + \mathcal{L}_{V}$ to get
\begin{equation*}
QX_{V}^{*}X_{V} = \textrm{Id} + S_{1} + Q\mathcal{L}_{V}.
\end{equation*}
By Proposition \ref{normalremainderreggain}, we have that $\mathcal{L}_{V} \in H^{k}(\Omega) \to H^{k+2}(\Omega_{1})$ for $0 \leq k \leq l$, and hence $Q\mathcal{L}_{V}: H^{k}(\Omega) \to H^{k+1}(\Omega)$. Moreover, from (\ref{LVpseudoremain}) we have that
\begin{equation*}
QX_{V}^{*}X_{V} = \textrm{Id} + QF + S_{1} + QR.
\end{equation*}
We can then construct $(\textrm{Id} + QF)^{-1}$, which is an elliptic pseudodifferential operator of order $0$ (its principal symbol is $1$). Thus
\begin{equation*}
(\textrm{Id} + QF)^{-1}QX_{V}^{*}X_{V} = \textrm{Id} + (\textrm{Id}+QF)^{-1}S_{1} +  (\textrm{Id}+QF)^{-1}QR.
\end{equation*}
We let 
\begin{equation}
v =  (\textrm{Id}+QF)^{-1}QRf \label{v}
\end{equation}
and note that $v \in H^{l}(\Omega)$. The other term $(\textrm{Id}+QF)^{-1}S_{1}f$ is microlocally smooth near $(z,\xi)$. Thus, $(z, \xi) \notin \textrm{WF}( (\textrm{Id} + QF)^{-1}QX_{V}^{*}X_{V}f) \Longrightarrow (z,\xi) \notin \textrm{WF}(f + v)$. Since
\begin{equation*}
\textrm{WF}((\textrm{Id} + QF)^{-1}QX_{V}^{*}X_{V}f) \subset \textrm{WF}((\textrm{Id} + QF)^{-1}Q) \cap \textrm{WF}(X_{V}^{*}X_{V}f),
\end{equation*}
the result follows.
\end{proof}

\begin{remark}It is easy to see that the open dense sets of pairs $(\sigma, k)$ that are dependent on $l$ form a nested sequence. Moreover, one can eliminate the remainder function $v$ by taking the intersection of all such open dense subsets. By the Baire category theorem, this limiting set of pairs will still be dense, but it is not clear if it remains open.\end{remark}

\begin{corollary}Suppose that $\|KT_{1}^{-1}\| < 1$. Then in Theorem \ref{microtheorem} we have $v = 0$.\end{corollary}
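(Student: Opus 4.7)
The plan is to trace back where the remainder $v$ from Theorem \ref{microtheorem} enters and argue that the hypothesis $\|KT_{1}^{-1}\| < 1$ forces it to vanish. Recall from (\ref{v}) that $v = (\textrm{Id}+QF)^{-1}QRf$, where $R$ is the error term in the decomposition $\mathcal{L}_V = F + R$ constructed in Proposition \ref{normalremainderreggain}. That error appears precisely because the parametrix $\widetilde{F}$ for $(\textrm{Id}-KT_{1}^{-1})$ is built via the asymptotic symbol expansion (\ref{asymptoticseries}), producing a smoothing error $\widetilde{R}_0$ in $(\textrm{Id}-KT_{1}^{-1})\widetilde{F} = J + \widetilde{R}_0$ and hence a generally nontrivial $\widetilde{R} = (\textrm{Id}-KT_{1}^{-1})^{-1}\widetilde{R}_0$ which propagates into $R$ through (\ref{LVpseudoremain}).

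Under the hypothesis $\|KT_{1}^{-1}\| < 1$, the Neumann series
\begin{equation*}
(\textrm{Id}-KT_{1}^{-1})^{-1} = \sum_{j=0}^{\infty}(KT_{1}^{-1})^{j}
\end{equation*}
converges in operator norm, and iterated application of Lemma \ref{reggain} shows that each term $(KT_{1}^{-1})^{j}J$ gains $j$ derivatives of regularity. I would therefore set $\widetilde{F} := (\textrm{Id}-KT_{1}^{-1})^{-1}J$ directly rather than constructing it from an asymptotic sum: the series (\ref{asymptoticseries}) is now genuinely convergent and represents the true inverse, not merely an approximation modulo $L^{-\infty}$. With this choice, the identity $(\textrm{Id}-KT_{1}^{-1})\widetilde{F} = J$ holds exactly, so $\widetilde{R}_0 = 0$ and therefore $\widetilde{R} = 0$.

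Since every summand of the remainder $R$ in (\ref{LVpseudoremain}) carries a factor of $\widetilde{R}$, it follows that $R = 0$ identically. Substituting into the definition (\ref{v}) gives $v = (\textrm{Id}+QF)^{-1}QRf = 0$ for every admissible $f$, which is the desired conclusion. The only technical point requiring verification is that with $\widetilde{F}$ chosen as the exact inverse, the operator $F$ retains its pseudodifferential character of order $-2$ so that the parametrix $(\textrm{Id}+QF)^{-1}$ still exists as an elliptic classical $\Psi$DO of order $0$ (as used in the proof of Theorem \ref{microtheorem}); this is the main place one must be careful, but it follows from the operator-norm convergence of the Neumann series together with the uniform regularity gain of each iterate guaranteed by Lemma \ref{reggain}, so no other step in the proof of Theorem \ref{microtheorem} is disturbed.
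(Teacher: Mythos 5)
Your argument is correct and follows essentially the same route as the paper: under $\|KT_{1}^{-1}\|<1$ the Neumann series converges and sums to the identity plus a weakly singular (hence pseudodifferential) operator, so the exact inverse can serve as $\widetilde{F}$ in (\ref{pseudoinverse}), forcing $\widetilde{R}_{0}=0$, hence $R=0$ and $v=0$ via (\ref{LVpseudoremain}) and (\ref{v}). Your explicit flagging of the need to confirm that $F$ retains its pseudodifferential character is a reasonable elaboration of a point the paper asserts in one line.
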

\begin{proof}In this case the series $\sum_{j=0}^{\infty}(KT_{1}^{-1})^{j}J$ converges to the identity plus a weakly singular integral operator $F$, which altogether is a pseudodifferential operator of order $0$. In light of (\ref{pseudoinverse}) this implies that $\widetilde{R_{0}} = 0$, and hence $R = 0$. By (\ref{LVpseudoremain}) and (\ref{v}) we have $v = 0$.\end{proof}

\begin{remark}
Here we make a brief mention of how one might utilize the above results to detect a source $f$ when partial data is known. Given $z \in \Omega$, suppose that one can measure $X_{V}\phi_{z}$ where $\phi_{z} = \delta(x - z)$ is a point source centered at $z$.

From the data $X_{V}f$, we may then compute $X_{V}^{*}X_{V}f(z)$, since
\begin{equation}
X_{V}^{*}X_{V}f(z) = \langle X_{V}^{*}X_{V}f, \phi_{z} \rangle_{L^{2}(\Omega_{1})} = \langle X_{V}f, X_{V}\phi_{z} \rangle_{L^{2}(\partial_{+}S\Omega_{1}, d\Sigma)}.
\end{equation}
By Theorem \ref{microtheorem}, the graph $\{ (z, X_{V}^{*}X_{V}f(z)) \, | \, z \in \Omega\}$ provides an image which indicates, modulo a function $v$ of some regularity depending on the regularity of $\sigma$ and $k$, the part of the wavefront set of $f$ contained in $\mathcal{M}'$.

However, in practice such a method of computing $X_{V}^{*}X_{V}$ might be too computationally expensive. So in the case that $\|T_{1}^{-1}K\|$ is suitably small, one could try using a Neumann series truncated to one or two terms to approximate $X_{V}^{*}$. The author intends to consider this further in future work.
\end{remark} 

\subsection*{Acknowledgements}This research was supported by NSF grant DMS-0838212. The results of this paper would not have been possible without some helpful and inspirational conversations with Gunther Uhlmann, and so for this the author expresses thanks. The author also thanks the journal referees for their helpful comments during the revision process.

\appendix

\section{Relevant Results on Singular Integral Operators \label{singularresults}}
\setcounter{section}{1}
Suppose we have an integral operator of the form
\begin{equation}
Kf(x) = a(x)f(x) +  \int K(x,x-y)f(y)\,dy, \qquad K(x,x-y) = r^{-n}\phi(x,\theta),
\end{equation}
where $f \in H^{l}(\Omega)$, $\theta = \frac{x-y}{|x-y|}$, and $r = |x-y|$. The function $\phi(x,\theta)$ is called the characteristic of the singular integral operator. We formally define the symbol $\Phi(x,\xi)$ of $K$ by
\begin{equation}
\Phi(x,\xi) = \int e^{-iz \cdot \xi}K(x,z)\,dz.
\end{equation}
It is easy to see by a change of variables that $\Phi$ is homogeneous of degree $0$ in $\xi$. Letting $\omega = \frac{\xi}{|\xi|}$, we will write $\Phi(x,\omega)$ from now on. It can be shown that if $K(x,x-y) = \phi(x,\theta)r^{-n}$, then
\begin{equation}
\Phi(x,\omega) = \int_{\mathbb{S}^{n-1}}\phi(x,\theta)\left[ \ln\left( \frac{1}{|\cos{\gamma}|} \right)+ \frac{i\pi}{2}\textrm{sign}\left( \cos{\gamma}\right) \right] \, d\theta
\end{equation}
where $\gamma$ is the angle between the vectors $x$ and $\omega$.

Consider the singular operator with a variable symbol,
\begin{equation}
(Af)(x):= a(x)f(x) + \int_{\rn{n}}\frac{\phi(x,\theta)}{|x-y|^{n}}f(y)\,dy = \int_{\rn{n}}e^{ix \cdot \xi} \Phi_{A}(x,\omega)\widehat{f}(\xi)\,d\xi, \quad \omega = \frac{\xi}{|\xi|}.\label{singularA}
\end{equation}
We introduce the class $\mathscr{R}_{l,\lambda}$ of those symbols that satisfy the condition
\begin{equation}
D_{x}^{\alpha}\Phi(x,\omega) \hat{\in} H^{\lambda}(\mathbb{S}^{n-1}), \quad \forall \alpha \, : \, |\alpha| \leq l.
\end{equation}
Here the relation $\beta(x,\omega) \hat{\in} H^{l}(\mathbb{S}^{n-1})$ means that
\begin{equation}
\int_{\mathbb{S}^{n-1}}|D_{\omega}^{\alpha}\beta(x,\omega)|^{2}\,d\omega \leq C, \quad 0 \leq | \alpha | \leq l.
\end{equation}
In this case, we say that $\beta(x,\omega)$ belongs to $H^{l}(\mathbb{S}^{n-1})$ uniformly with respect to the parameter $x$. For symbols of singular integral operators that satisfy such a condition, we have the following useful theorem.
\begin{theorem}[Theorem XI.9.2, \cite{mikhlin}] If $\Phi_{A}(x,\omega) \in \mathscr{R}_{l,\lambda}$ where $\lambda > \frac{n-1}{2}$, then the operator (\ref{singularA}) is bounded in $H^{l}(\rn{n})$.\label{9.2}\end{theorem}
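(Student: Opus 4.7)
The plan is to decompose the characteristic $\phi(x,\theta)$ in an orthonormal basis $\{Y_{k,j}\}$ of spherical harmonics on $\mathbb{S}^{n-1}$, reduce $A$ to a series of model singular integrals whose $L^{2}$ boundedness is explicit via Bochner's formula, then bootstrap to $H^{l}$ by differentiating in $x$ and tracking commutators. The threshold $\lambda > (n-1)/2$ will appear precisely as the Sobolev embedding threshold $H^{\lambda}(\mathbb{S}^{n-1}) \hookrightarrow L^{\infty}(\mathbb{S}^{n-1})$ needed to sum the harmonic pieces.

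First I would settle the case $l=0$. Expand
\[
\phi(x,\theta) = \sum_{k \geq 0}\sum_{j} c_{k,j}(x)\, Y_{k,j}(\theta),
\]
so that the hypothesis $\phi(x,\cdot)\,\hat{\in}\, H^{\lambda}(\mathbb{S}^{n-1})$ is equivalent to the uniform bound $\sum_{k,j}(1+k)^{2\lambda}|c_{k,j}(x)|^{2} \leq C$. For $k \geq 1$, each principal-value operator
\[
T_{k,j}f(x) = \mathrm{p.v.}\!\int \frac{Y_{k,j}\bigl(\tfrac{x-y}{|x-y|}\bigr)}{|x-y|^{n}}\, f(y)\,dy
\]
is, by Bochner's identity, a Fourier multiplier whose symbol is proportional to $Y_{k,j}(\xi/|\xi|)$, so $\|T_{k,j}\|_{L^{2} \to L^{2}}$ is controlled by $\|Y_{k,j}\|_{L^{\infty}(\mathbb{S}^{n-1})}$, which grows polynomially in $k$. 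The $k=0$ (constant) mode is non-integrable as a PV but combines with the diagonal term $a(x)f(x)$ into a bounded multiplication operator. Writing $A = aI + \sum_{k \geq 1,j} c_{k,j}\, T_{k,j}$ and applying Cauchy--Schwarz in the multi-index $(k,j)$, the $L^{2}$-norm of $Af$ splits into
\[
\Bigl(\sup_{x} \sum_{k,j}(1+k)^{2\lambda}|c_{k,j}(x)|^{2}\Bigr)^{1/2} \Bigl(\sum_{k,j}(1+k)^{-2\lambda}\|T_{k,j}\|^{2}\Bigr)^{1/2} \|f\|_{L^{2}}.
\]
The first factor is bounded by hypothesis; the second factor is finite precisely when $\lambda > (n-1)/2$, upon accounting for the $O(k^{n-2})$ multiplicity of degree-$k$ spherical harmonics together with the polynomial growth of their $L^{\infty}$ norms.

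Next I would pass from $L^{2}$ to $H^{l}$ by induction. It suffices to show that $D_{x}^{\alpha}A$ is $L^{2}$-bounded for each $|\alpha| \leq l$. Differentiating under the integral produces a main term, which is again an operator of the form \eqref{singularA} with the characteristic $D_{x}^{\alpha}\phi$ in place of $\phi$ (and still of class $\mathscr{R}_{0,\lambda}$ by hypothesis, so the $l=0$ estimate applies), plus commutator terms in which $x$-derivatives fall on $(x-y)/|x-y|$ through the kernel. After integration by parts, each commutator is itself a singular integral whose characteristic is built from lower-order $x$-derivatives of $\phi$ multiplied by smooth functions on the sphere, hence remains in a uniform $H^{\lambda}(\mathbb{S}^{n-1})$ class, acting on lower-order derivatives of $f$; the $l=0$ bound applies term-by-term. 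Summing over $|\alpha| \leq l$ yields $\|Af\|_{H^{l}} \leq C\|f\|_{H^{l}}$.

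The main obstacle is the sharp summability in the $l=0$ step: one must match the polynomial growth $\|T_{k,j}\|_{L^{2}\to L^{2}} = O(k^{(n-2)/2})$ coming from Bochner's explicit Fourier transform of $|x|^{-n}Y_{k,j}(x/|x|)$ against the multiplicity $O(k^{n-2})$ of the degree-$k$ harmonic space, so that the series $\sum (1+k)^{-2\lambda}\|T_{k,j}\|^{2}$ converges exactly under $\lambda > (n-1)/2$. The commutator bookkeeping at higher $l$ is then routine provided one verifies that every produced characteristic stays uniformly in $H^{\lambda}(\mathbb{S}^{n-1})$, which is built into the definition of $\mathscr{R}_{l,\lambda}$.
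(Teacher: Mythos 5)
First, note that the paper does not prove this statement at all: it is quoted verbatim as Theorem XI.9.2 of \cite{mikhlin} and used as a black box, so there is no internal proof to compare against. Your outline does follow the classical Mikhlin strategy (expand in spherical harmonics, invoke the Bochner--Hecke identity to reduce to Fourier multipliers, sum via Cauchy--Schwarz, then commute derivatives through for higher $l$), which is the right general approach and essentially the one taken in \cite{mikhlin}.

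However, the decisive quantitative step does not close as written. With the bounds you state --- $\|T_{k,j}\|_{L^{2}\to L^{2}} = O(k^{(n-2)/2})$ and multiplicity $O(k^{n-2})$ in degree $k$ --- the series you need, $\sum_{k,j}(1+k)^{-2\lambda}\|T_{k,j}\|^{2}$, behaves like $\sum_{k}k^{n-2}\cdot k^{n-2}\cdot k^{-2\lambda}$ and converges only for $\lambda > n-\tfrac{3}{2}$, which for $n\geq 3$ is strictly stronger than the hypothesis $\lambda>\tfrac{n-1}{2}$. The sharp threshold cannot be reached by estimating each $T_{k,j}$ separately through its operator norm and summing absolutely over $(k,j)$; one must exploit orthogonality within each degree --- e.g.\ via the addition theorem $\sum_{j}|Y_{k,j}(\omega)|^{2} = k_{k,n}/|\mathbb{S}^{n-1}|$, which turns the $j$-sum into a square-function identity on the Fourier side --- together with the decay $\gamma_{k}\sim k^{-n/2}$ of the Bochner constants relating the kernel $Y_{k,j}(\theta)r^{-n}$ to its multiplier $\gamma_{k}Y_{k,j}(\xi/|\xi|)$. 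Your closing paragraph asserts that the multiplicity and the $L^{\infty}$ growth ``match'' to give exactly $\lambda>\tfrac{n-1}{2}$, but the arithmetic behind that assertion is not supplied and, with the ingredients listed, is false. A second, related slip: the hypothesis $\Phi_{A}\in\mathscr{R}_{l,\lambda}$ is a condition on the \emph{symbol}, whereas you expand the \emph{characteristic} $\phi$ and declare the hypothesis ``equivalent'' to a weighted $\ell^{2}$ bound on its coefficients. These two conditions differ by a shift of $n/2$ in the Sobolev index on $\mathbb{S}^{n-1}$ (this is precisely Theorem \ref{mikhlin7.1} quoted in the appendix, and it is visible in the decay of $\gamma_{k}$), so conflating them displaces the threshold you are trying to hit. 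The higher-order ($H^{l}$) commutator bookkeeping is indeed routine once the $l=0$ case is correct, but as it stands the $l=0$ case is the gap.
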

For relating the characteristic $\phi(x,\theta)$ to its symbol $\Phi(x,\omega)$ we have the following theorem
\begin{theorem}[Theorem X.7.1, \cite{mikhlin}] The symbol of a singular integral operator satisfies the relation $\Phi(x,\omega) \hat{\in} H^{\lambda}(\mathbb{S}^{n-1})$ if and only if the characteristic of this integral satisfies the condition $\phi(x,\theta) \hat{\in} H^{l}(\mathbb{S}^{n-1})$ where $l = \lambda - \frac{n}{2}$.\label{mikhlin7.1}\end{theorem}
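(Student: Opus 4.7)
The plan is to reduce the stated equivalence to a spectral statement about a single, $x$-independent convolution operator on the sphere. For each fixed $x$, the explicit formula for the symbol given just above Theorem \ref{mikhlin7.1} expresses $\Phi(x,\cdot)$ as the spherical convolution of $\phi(x,\cdot)$ against the kernel
\begin{equation*}
m(t) \;=\; \ln\!\bigl(1/|t|\bigr) \;+\; \tfrac{i\pi}{2}\,\mathrm{sign}(t),
\end{equation*}
evaluated at $t=\cos\gamma = \theta\cdot\omega$. Writing $\mathcal{M}:\phi(x,\cdot)\mapsto \Phi(x,\cdot)$, the operator $\mathcal{M}$ is independent of $x$, so if I can show that $\mathcal{M}$ is a topological isomorphism from $H^{l}(\mathbb{S}^{n-1})$ onto $H^{l+n/2}(\mathbb{S}^{n-1})$, the uniform-in-$x$ equivalence claimed (the $\hat{\in}$ notation) follows immediately by taking the corresponding estimate with the uniform constant coming from the operator norm of $\mathcal{M}$ and its inverse.

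The main tool is the Funk-Hecke formula: any spherical convolution with an axially symmetric kernel $g(\theta\cdot\omega)$ acts diagonally on the decomposition into spherical harmonics, with
\begin{equation*}
\int_{\mathbb{S}^{n-1}} g(\theta\cdot\omega)\,Y_{k}(\theta)\,d\theta \;=\; \lambda_{k}\,Y_{k}(\omega), \qquad \lambda_{k} \;=\; c_{n}\!\int_{-1}^{1} g(t)\,C_{k}^{(n-2)/2}(t)\,(1-t^{2})^{(n-3)/2}\,dt,
\end{equation*}
where $C_{k}^{\nu}$ is the Gegenbauer polynomial of degree $k$. Combined with the Parseval-type characterization
\begin{equation*}
\|f\|_{H^{s}(\mathbb{S}^{n-1})}^{2} \;\asymp\; \sum_{k\ge 0}(1+k)^{2s}\,\|\mathbf{P}_{k}f\|_{L^{2}(\mathbb{S}^{n-1})}^{2},
\end{equation*}
this will reduce the whole theorem to showing that the sequence of eigenvalues associated with $g=m$ satisfies the two-sided bound $|\lambda_{k}| \asymp (1+k)^{-n/2}$. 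Once this is done, $\mathcal{M}$ trivially gains $n/2$ derivatives in the forward direction, and the positive lower bound on $|\lambda_{k}|$ (together with the fact that none of the $\lambda_{k}$ vanish) yields the two-sided isomorphism and hence the converse implication; uniformity in the parameter $x$ is automatic.

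The main obstacle is therefore the asymptotic analysis of $\lambda_{k}$ as $k\to\infty$. I would split $m$ into its even part $\ln(1/|t|)$ and its odd part $\tfrac{i\pi}{2}\mathrm{sign}(t)$, and for each piece carry out a Laplace-type analysis of the Gegenbauer integral, using the classical asymptotics $C_{k}^{\nu}(\cos\varphi) \sim k^{\nu-1/2}\cos\bigl((k+\nu)\varphi - \nu\pi/2\bigr)$ together with the normalization $(1-t^{2})^{(n-3)/2}$. In both cases the dominant contribution comes from the singularity at $t=0$ (equivalently $\varphi=\pi/2$): the logarithmic piece produces eigenvalues of definite sign decaying like $k^{-n/2}$, while the $\mathrm{sign}$ piece produces $k^{-n/2}$ times a nonzero oscillatory factor that is bounded below on the relevant subsequence, so that their sum is nonzero and has absolute value comparable to $k^{-n/2}$ for all $k$ large. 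It remains to verify that the finitely many low-degree eigenvalues are nonzero (they are, because $m$ is not orthogonal to any fixed Gegenbauer polynomial against its weight), closing the argument that $\mathcal{M}$ is a smoothing isomorphism of exact order $n/2$. The equivalence in the theorem follows by applying this isomorphism to $\phi(x,\cdot)$ and $\Phi(x,\cdot)$ for each $x$.
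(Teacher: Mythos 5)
This statement is quoted verbatim from Mikhlin--Pr\"ossdorf (Theorem X.7.1 of \cite{mikhlin}); the paper supplies no proof of it, so there is no internal argument to compare yours against, only the classical one in the cited source. On its own terms your outline is essentially that classical argument and is sound in structure: for fixed $x$ the symbol map is spherical convolution of $\phi(x,\cdot)$ with the zonal kernel $\ln(1/|\cos\gamma|)+\tfrac{i\pi}{2}\mathrm{sign}(\cos\gamma)$, Funk--Hecke diagonalizes it on spherical harmonics, and the whole equivalence (uniformly in $x$, since the operator is $x$-independent) reduces to the two-sided bound $|\lambda_k|\asymp(1+k)^{-n/2}$ together with $\lambda_k\neq 0$ for all $k$. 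Two caveats. First, the eigenvalues are known in closed form, $\lambda_k=(-i)^k\pi^{n/2}\,\Gamma(k/2)/\Gamma((k+n)/2)$ for $k\ge 1$, from which Stirling gives the $k^{-n/2}$ asymptotics and the nonvanishing at once; your proposed Laplace-type analysis of the Gegenbauer integrals can be made to work (the even/odd parity split you describe is correct and prevents interference between the $\ln$ and $\mathrm{sign}$ pieces), but it is strictly harder than quoting the exact formula, and it is exactly where your sketch is thinnest --- in particular the assertion that the low-degree eigenvalues are nonzero ``because $m$ is not orthogonal to any fixed Gegenbauer polynomial'' is a restatement of what must be checked, not a proof of it. Second, the $k=0$ mode is genuinely exceptional: the principal-value integral with kernel $\phi\,r^{-n}$ requires $\int_{\mathbb{S}^{n-1}}\phi(x,\theta)\,d\theta=0$ (consistent with the formal eigenvalue involving $\Gamma(0)$), so your isomorphism should be stated on the orthogonal complement of the constants, with the constant component absorbed into the multiplier term $a(x)$. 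With those two repairs the argument closes.
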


We also recall that the derivative of a weakly singular integral operator (\cite{mikhlin}, IX \S 7) is given by
\begin{equation}
\frac{\partial}{\partial x_{k}} \int_{\Omega} \frac{\phi(x,\theta)}{r^{n-1}}f(y)\,dy = \int_{\Omega}f(y) \frac{\partial}{\partial x_{k}}\left[ \frac{\phi(x,\theta)}{r^{n-1}} \right] \, dy - f(x) \int_{\mathbb{S}^{n-1}} \phi(x,\theta)\theta_{k}\,dS(y). \label{singularD}
\end{equation}
This formula holds for any $f \in L^{2}(\Omega)$ and for $\phi \in C^{1}(\Omega, \mathbb{S}^{n-1})$.

In dealing with weakly singular integral operators depending on a parameter $\theta$ and acting on functions $f$ depending on $x$ and $\theta'$, it will be helpful to work with a particular type of space on which these operators work nicely. In particular, we will use expansions of functions in terms of spherical harmonics. Recall that any function $g(x,\theta) \in C^{\infty}(\rn{n} \times \mathbb{S}^{n-1})$ can be expanded as a series
\begin{equation}
g(x,\theta) = \sum_{m=0}^{\infty}\sum_{k=1}^{k_{m,n}}a_{m}^{(k)}(x)Y_{m,n}^{(k)}(\theta),
\end{equation}
where
\begin{equation*}
k_{m,n} = \frac{(2m+n-2)(m+n-3)!}{(n-2)!m!}
\end{equation*}
denotes the number of linearly independent spherical functions of order $m$. Furthermore, if $g$ has compact support we claim that
\begin{equation}
\sum_{m=0}^{\infty}\sum_{k=1}^{k_{m,n}}\|a_{m}^{(k)}\|_{H^{l}(\rn{n})}\|Y_{m,n}^{(k)}\|_{H^{1}(\mathbb{S}^{n-1})} < \infty \quad \forall l \geq 0. \label{specialnormbound1}
\end{equation}
In \cite{inversesource} it is stated that for $g \in L^{\infty}(\rn{n}; C^{n+1}(\mathbb{S}^{n+1}))$ with compact support, we have that
\begin{equation}
\sum_{m=0}^{\infty}\sum_{k=1}^{k_{m,n}}\|a_{m}^{(k)}\|_{L^{\infty}(\rn{n})}\|Y_{m,n}^{(k)}\|_{H^{1}(\mathbb{S}^{n-1})} < \infty \label{specialnormbound2}
\end{equation}
Under the assumption that $g$ is compactly supported, we have that the $L^{\infty}$ norm is comparable to the $L^{2}$ norm. Since all derivatives of $g$ also satisfy (\ref{specialnormbound2}) with $\|a_{m}^{(k)}\|_{L^{\infty}(\rn{n})}$ replaced by $\|a_{m}^{(k)}\|_{L^{2}(\rn{n})}$, we have that (\ref{specialnormbound1}) holds.

Recall our definition of $\mathcal{H}_{l}(\Omega \times \mathbb{S}^{n-1})$ as the completion of $C^{\infty}(\Omega \times \mathbb{S}^{n-1})$ with respect to the norm $\| \cdot \|_{\mathcal{H}_{l}(\Omega \times \mathbb{S}^{n-1})}$, and $\mathcal{C}_{l}(\Omega \times \mathbb{S}^{n-1})$ as the completion of $C^{\infty}(\Omega \times \mathbb{S}^{n-1})$ with respect to the norm $\| \cdot \|_{\mathcal{C}_{l}(\Omega \times \mathbb{S}^{n-1})}$.

 The following proposition related to weakly singular integral operators and its applications in context will prove useful.
\begin{proposition}Let $A$ be the operator
\begin{equation*}
[Af](x) = \int \frac{ \alpha\left(x,y,|x-y|,\frac{x-y}{|x-y|}\right)}{|x-y|^{n-1}}f\left(y,\frac{x-y}{|x-y|} \right)\,dy
\end{equation*}
with $\alpha(x,y,r,\theta)$ compactly supported in $x$ and $y$. Then for a constant 
$C>0$ depending only on $n$ and $l$,
\begin{enumerate}
\item[a)] If $\alpha \in C^{2l+2}(\rn{n}_{x} \times \rn{n}_{y} \times \mathbb{R}_{r} \times \mathbb{S}^{n-1}_{\theta})$, then $A:H^{l}(\Omega) \to H^{l+1}(\rn{n})$ is continuous with a norm not exceeding $C\|\alpha\|_{C^{2l+2}}$.
\item[b)] If $\alpha(x,y,r,\theta) = \alpha'(x,y,r,\theta)\phi(\theta)$ and also in $C^{2l+2}$, then $$\|A\|_{H^{l}(\Omega) \to H^{l+1}(\rn{n})} \leq C \|\alpha'\|_{C^{2l+2}}\|\phi\|_{H^{1}(\mathbb{S}^{n-1})}.$$
\item[c)] If $f \in \mathcal{H}_{l}(\Omega)$ and $\alpha$ is as in (a), then $A:\mathcal{H}_{l}(\Omega \times \mathbb{S}^{n-1}) \to H^{l+1}(\Omega)$ is continuous with $\|A\|_{\mathcal{H}_{l}(\Omega \times \mathbb{S}^{n-1}) \to H^{l+1}(\rn{n})} \leq C \|\alpha\|_{C^{2l+2}}$.
\item[d)] If $\alpha = \alpha(x,y,r,\theta,\eta) \in C^{\infty}(\rn{n}_{x} \times \rn{n}_{y} \times \mathbb{R}_{r} \times \mathbb{S}^{n-1}_{\theta} \times \mathbb{S}^{n-1}_{\eta})$ is compactly supported in $x$ and $y$, then $A:\mathcal{H}_{l}(\Omega \times \mathbb{S}^{n-1}) \to \mathcal{H}_{l+1}(\rn{n} \times \mathbb{S}^{n-1})$ is bounded with $$\|A\|_{\mathcal{H}_{l}(\Omega \times \mathbb{S}^{n-1}) \to \mathcal{H}_{l+1}(\rn{n} \times \mathbb{S}^{n-1})} \leq C \|\alpha\|_{\mathcal{C}_{2l+2}(\rn{n} \times \mathbb{S}^{n-1})}.$$
\end{enumerate}
\label{genreggain}\end{proposition}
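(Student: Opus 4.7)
The plan is to build the four parts on top of each other, starting with (a) as the analytic core and then obtaining (b)--(d) by separating variables and applying spherical harmonic expansions.

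For part (a), I would show the stronger intermediate claim that $\partial_{x_k} A$ is bounded on $H^l(\Omega)$. Using the differentiation formula (\ref{singularD}), each derivative $\partial_{x_k}$ applied to the weakly singular kernel $|x-y|^{-(n-1)} \alpha(x, y, |x-y|, \theta)$ produces a true Calder\'on--Zygmund kernel of order $|x-y|^{-n}$ with a new characteristic $\beta_k(x, \theta)$ built from one derivative of $\alpha$, plus a multiplication-by-$f$ term coming from the boundary contribution on $\mathbb{S}^{n-1}$. The multiplication term is handled by the multiplier estimate on $H^l$ as long as $\alpha$ is $C^l$; the singular integral term is handled by Theorem \ref{9.2} once one checks, via Theorem \ref{mikhlin7.1}, that its symbol lies in $\mathscr{R}_{l, \lambda}$ with $\lambda > (n-1)/2$. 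The latter is guaranteed provided each $x$-derivative of $\beta_k$ up through order $l$ is uniformly in a Sobolev class $H^{l'}(\mathbb{S}^{n-1})$ for some $l' > (n-1)/2 - n/2$, which is an embedding from $C^{l+1}(\mathbb{S}^{n-1})$. Tracking the derivatives shows that one needs $l+1$ derivatives of $\alpha$ to produce the singular kernel and process $x$-regularity, plus another $l+1$ derivatives of $\alpha$ so that the characteristic remains smooth enough in $\theta$; this is exactly the $C^{2l+2}$ assumption, and the norm bound follows.

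For part (b), I would retrace the proof of (a) observing that if $\alpha = \alpha'(x,y,r,\theta)\phi(\theta)$ then every intermediate characteristic produced by differentiation factors as a smooth function times $\phi(\theta)$. Thus the $H^\lambda(\mathbb{S}^{n-1})$ norm of each intermediate characteristic is controlled by $\|\phi\|_{H^1(\mathbb{S}^{n-1})}$ (using Theorem \ref{mikhlin7.1}, taking $\lambda$ only slightly above $(n-1)/2$ so that $\lambda - n/2 \leq 1$), times a quantity controlled by $\|\alpha'\|_{C^{2l+2}}$. For part (c), I would expand $f(y, \theta) = \sum_{m,k} a_m^{(k)}(y) Y_{m,n}^{(k)}(\theta)$ in spherical harmonics; then
\begin{equation*}
Af(x) = \sum_{m, k} \int \frac{\alpha(x, y, |x-y|, \theta)\, Y_{m,n}^{(k)}(\theta)}{|x-y|^{n-1}} a_m^{(k)}(y) \, dy,
\end{equation*}
which for each $(m,k)$ is exactly an operator of the form in part (b) with $\phi = Y_{m,n}^{(k)}$. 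Summing the bounds and comparing to the definition of $\|\cdot\|_{\mathcal{H}_l}$ gives the claim.

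Finally, for part (d) I would repeat the spherical harmonic trick in the $\eta$ variable: write $\alpha(x,y,r,\theta,\eta) = \sum_{m,k} c_m^{(k)}(x,y,r,\theta) Y_{m,n}^{(k)}(\eta)$, where the dependence on $\eta$ is separated. Since $\eta$ plays the role of a spectator parameter in the integral over $y$, each summand yields an operator of type (c) on $(x, \theta)$ with output in $H^{l+1}(\rn{n})$, and the $\eta$-dependence reads off immediately as $Y_{m,n}^{(k)}(\eta)$ multiplying this output. The spherical harmonic expansion of $Af(x, \eta)$ is then exactly what is needed to evaluate the $\mathcal{H}_{l+1}$ norm, and it is controlled by $\sum_{m,k} \|c_m^{(k)}\|_{C^{2l+2}} \|Y_{m,n}^{(k)}\|_{H^1(\mathbb{S}^{n-1})} = \|\alpha\|_{\mathcal{C}_{2l+2}}$ as needed.

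The main obstacle is part (a), specifically the careful bookkeeping of how many derivatives of $\alpha$ are consumed when passing derivatives under the integral via (\ref{singularD}) and then invoking Theorems \ref{9.2} and \ref{mikhlin7.1}; once the derivative budget is shown to be $2l+2$, the remaining parts are essentially algebraic manipulations built on (a) via separation of variables and spherical harmonic expansions.
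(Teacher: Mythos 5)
There is a genuine gap in your treatment of part (a), which you correctly identify as the analytic core. You apply the differentiation formula (\ref{singularD}) directly to the kernel $\alpha\left(x,y,|x-y|,\frac{x-y}{|x-y|}\right)|x-y|^{-(n-1)}$ and assert that the resulting order-$|x-y|^{-n}$ term has a characteristic $\beta_{k}(x,\theta)$ depending on $x$ and $\theta$ only. That is false when $\alpha$ genuinely depends on $y$ and $r$: one $x$-derivative produces a term $\left[(1-n)\theta_{k}\alpha + \partial_{\theta_{k}}\alpha\right](x,y,r,\theta)\,r^{-n}$ (plus weakly singular terms), and the bracket still depends on $y$ and $r$. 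Such a kernel is not homogeneous of degree $-n$ in $x-y$, is not a Mikhlin characteristic, and Theorems \ref{9.2} and \ref{mikhlin7.1} --- which are stated for characteristics $\phi(x,\theta)$ and symbols $\Phi(x,\omega)$ --- do not apply to it. The paper resolves this by an iterated first-order Taylor expansion $\alpha_{i}(x,y,r,\theta) = \alpha_{i}(x,x,0,\theta) + r\,\gamma_{i+1}(x,y,r,\theta)$ at $y=x$, $r=0$: the frozen term is a true $(x,\theta)$-characteristic handled by the Mikhlin machinery, while the remainder gains a power of $r$, i.e.\ is one order less singular, and can absorb one further $x$-derivative before becoming weakly singular again. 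Each of the $l+1$ rounds therefore consumes two derivatives of $\alpha$ (one for the Taylor remainder, one for the differentiation), and this --- not the $\theta$-smoothness of the characteristic, which only requires the fixed amount of regularity needed to place $\phi(x,\cdot)$ uniformly in $H^{\lambda - n/2}(\mathbb{S}^{n-1})$ with $\lambda$ slightly above $\frac{n-1}{2}$ --- is the source of the exponent $2l+2$. Your stated derivative budget (``another $l+1$ derivatives so that the characteristic remains smooth enough in $\theta$'') is therefore incorrect, and without the Taylor-expansion device (or an equivalent) the argument for (a) does not close.

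Your parts (b), (c), and (d) follow the paper's route: observing that $\phi$ is differentiated exactly once in each term of the decomposition, so only $\|\phi\|_{H^{1}(\mathbb{S}^{n-1})}$ enters; expanding $f$ in spherical harmonics in $\theta$ to reduce (c) to (b); and expanding $\alpha$ in spherical harmonics in the spectator variable $\eta$ to reduce (d) to (c). These reductions are fine as written, but they all rest on the quantitative bound in (a), so the whole proof stands or falls with the repair described above.
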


\begin{proof}First note that in all cases, since $\alpha$ is compactly supported in $x$ and $y$, we can let $U \Subset \rn{n}$ be such that $\alpha(x,y,r,\theta,\eta) = 0$ for $x,y \notin U$. We can then replace $\alpha$ by $\alpha \psi$ without affecting the integral, where $\psi \in C_{c}^{\infty}(\mathbb{R})$ satisfies $\psi(r) = 1$ for $r \in [0,\textrm{diam}(U)]$. Thus, we may as well assume that $\alpha$ is compactly supported in all variables.

Consider the case that $f$ is independent of $\theta$ and $\alpha$ only depends on $x$ and $\theta$. From the Calder\'on-Zygmund theory of singular operators, we know that for an integral operator $K$ with singular kernel $k(x,y) = \phi(x,\theta)r^{-n}$ where we recall $r = |x-y|$, if $\phi$ has mean value $0$ as a function of $\theta$ for each $x$, then $K$ is a well-defined operator on test functions, where the integration has to be understood in the principal value sense. Moreover, $K$ extends to a bounded operator on $L^{2}$ satisfying $\|K\| \leq C \sup_{x} \|\phi(x,\cdot)\|_{L^{2}(\mathbb{S}^{n-1})}$ (Theorem XI.3.1, \cite{mikhlin}). As a remark, the extension can be considered as a convolution in the sense of distributions, and then $\phi$ need not have mean value $0$ in $\theta$.

Let $(j_{1}, j_{2}, \ldots, j_{l+1})$ be a multi index. To make notation a bit more consistent, let $\alpha_{0} = \alpha$. Consider the derivative $\partial_{x_{j_{1}}}A$, which by (\ref{singularD}) and (\cite{inversesource}, Proposition 1) consists of a bounded term $a_{1}(x)f(x)$ plus the integral operator with kernel
\begin{equation} 
\partial_{x_{j_{1}}} \frac{\alpha(x,\theta)}{r^{n-1}} =\frac{(1-n)\theta_{j_{1}}\alpha + \partial_{\theta_{j_{1}}}\alpha}{r^{n}} + \frac{\partial_{x_{j_{1}}}\alpha(x,\theta)}{r^{n-1}}. \label{prop1eq1}
\end{equation}
Letting $\phi_{1}(x,\theta) := (1-n)\theta_{j_{1}}\alpha(x,\theta) + \partial_{\theta_{j_{1}}}\alpha(x,\theta)$, since $\alpha \in C^{2l+2}$ and is compactly supported in $x$, we have that the symbol
\begin{equation*}
\Phi_{1}(x,\omega) = \int e^{-i z \cdot \omega}\frac{\phi_{1}\left(x,\frac{z}{|z|}\right)}{|z|^{n}}\,dz
\end{equation*}
of $\phi_{1}$ belongs to $C^{2l+1}(\rn{n} \times \mathbb{S}^{n-1})$. Since $\phi_{1}$ is compactly supported in $x$,  $\partial_{x}^{\gamma} \phi_{1} \hat{\in} H^{2l+1-|\gamma|}(\mathbb{S}^{n-1}) \subset H^{1+\lambda - \frac{n}{2}}(\mathbb{S}^{n-1})$ for all $0 \leq |\gamma| \leq l$ and for some fixed $\lambda > \frac{n-1}{2}$ (in particular we could take $\lambda = \frac{n}{2}$). By Theorem \ref{mikhlin7.1}, $\Phi_{1}(x,\omega) \in \mathscr{R}_{l,\lambda}$. By Theorem \ref{9.2} we have that the integral kernel $\phi_{1}(x,\theta)r^{-n}$ corresponds to a singular integral operator that is bounded on $H^{l}$. For the second term in (\ref{prop1eq1}), which is a weakly singular integral kernel, we have that $\alpha_{1}(x,\theta) := \partial_{x_{j_{1}}}\alpha(x,\theta) \in C^{2l+1}$. Similarly as before, we compute $\partial_{x_{j_{2}}}\left( \frac{\alpha_{1}(x,\theta)}{r^{n-1}}\right)$ which corresponds to an operator with a bounded multiplier $a_{2}(x)$, a singular integral operator, and a weakly singular integral operator. It can be shown analogously that the symbol $\Phi_{2}(x,\xi)$ corresponding to the characteristic $\phi_{2}(x,\theta)$ of the singular integral term belongs to $\mathscr{R}_{l-1,\lambda}$. Thus modulo a weakly singular integral operator, the operator $A_{2}$ with kernel $\phi_{2}(x,\theta)r^{-n}$ is bounded on $H^{l-1}$. We then focus our attention on the weakly singular integral operator that remains.

After repeating this process a total of $l+1$ times, which involves $l+1$ differentiations, the remaining weakly singular integral operator has a kernel $\alpha_{l+1}(x,\theta)r^{-n+1}$ with $\alpha_{l+1}(x,\theta) \in C^{l+1}$. We can then proceed as in the proof of  (\cite{inversesource}, Proposition 1) to obtain that this term is bounded on $L^{2}(\Omega)$. In particular, we use the criterion from Calder\'on Zygmund Theory which states that if $K$ is an integral operator with integral kernel $k(x,y)$ satisfying
\begin{equation}
\sup_{x} \int |k(x,y)|\,dy \leq M, \quad \sup_{y} \int |k(x,y)|\,dx \leq M, \label{zygmund}
\end{equation}
then $K$ is bounded in $L^{2}$ with a norm not exceeding $M$ (\cite{stein}, Prop. A.5.1).

Now we want to bound the operator norm $\|A\|_{H^{l}(\Omega) \to H^{l+1}(\rn{n})}$ in this simpler case. Let $\phi_{i}$ be the characteristic of the $i$th singular integral operator obtained by the above process with symbol $\Phi_{i}$ and $\widetilde{A}_{i}$ the corresponding singular integral operator. Note that $\widetilde{A}_{i}$ is bounded on $H^{l-i+1}(\Omega)$. One can explicitly compute that
\begin{align}
\alpha_{i} & = \partial_{x_{j_{1}}x_{j_{2}}\cdots x_{j_{i}}}\alpha,\\
\phi_{i} & = (1-n)\theta_{j_{i}} \alpha_{i-1} + \partial_{\theta_{j_{i}}} \alpha_{i-1},\\
a_{i}(x) & = \int \alpha_{i-1}(x,\theta)\theta_{j_{i}}\, dS(y), \quad \textrm{for } 1 \leq i \leq l+1.
\end{align}
Also define
\begin{equation}
\widetilde{R}_{i}f(x) := \int \frac{\alpha_{i}(x,\theta)}{r^{n-1}}f(y)\,dy.
\end{equation}
We have
\begin{align}
\|\phi_{i}\|_{C^{l+1-i}} & \leq C\|\alpha\|_{C^{l+1}} \notag\\
\|\alpha_{i}\|_{C^{l+1-i}} & \leq C\|\alpha\|_{C^{l+1}} \notag\\
\|a_{i}\|_{C^{l+1-i}} & \leq C\|\alpha_{i-1}\|_{C^{l+1-i}} \leq C\|\alpha_{i-1}\|_{C^{l+1- (i-1)}} \leq C'\|\alpha\|_{C^{l+1}} \label{kernelbound}
\end{align}
If $\|f\|_{H^{l}(\Omega)} = 1$, then (\cite{mikhlin}, Thm XI.3.2, Thm XI.9.2) imply that for $1 \leq i \leq l+1$,
\begin{align}
\|\widetilde{A}_{i}f\|_{H^{l-i+1}(\rn{n})} & = \sum_{|\beta| \leq l-i+1}\|D_{x}^{\beta}\widetilde{A}_{i}f\|_{L^{2}(\rn{n})} \notag\\
& \leq \sum_{|\beta| \leq l-i+1}\sum_{\gamma \leq \beta} \frac{\beta !}{\gamma ! (\beta - \gamma)!} \sup_{x} \|D_{x}^{\beta - \gamma}\Phi_{i}(x,\cdot)\|_{H^{\frac{n}{2}}(\mathbb{S}^{n-1})} \|f\|_{H^{|\beta|}(\Omega)} \notag \\
& \leq C\sum_{|\beta| \leq l-i+1}\sum_{\gamma \leq \beta} \sup_{x} \|D_{x}^{\beta - \gamma}\phi_{i}(x,\cdot)\|_{L^{2}(\mathbb{S}^{n-1})} \|f\|_{H^{l}(\Omega)} \notag\\
& \leq C \|\phi_{i}\|_{C^{l-i+1}} \notag\\
& \leq C \|\alpha\|_{C^{l+1}}. \label{tildeAibound}
\end{align}
Now we estimate $\|Af\|_{H^{l+1}}$ using (\ref{tildeAibound}) with the understanding that we sum over all indices $j_{1},j_{2},\ldots$. Again, assume that $\|f\|_{H^{l}(\Omega)} = 1$.
\begin{align*}
\|Af\|_{H^{l+1}} & = \|Af\|_{L^{2}} + \|\partial_{x_{j_{1}}}Af\|_{H^{l}}\\
& \lesssim \|\alpha\|_{C^{0}} + \|a_{1}f\|_{H^{l}} + \|\widetilde{A}_{1}f\|_{H^{l}} + \|\widetilde{R}_{1}f\|_{H^{l}}\\
& \lesssim\|\alpha\|_{C^{0}} + \|a_{1}\|_{C^{l}} + \|\alpha\|_{C^{l+1}} + \|\widetilde{R}_{1}f\|_{L^{2}} + \|\partial_{x_{j_{2}}}\widetilde{R}_{1}f\|_{H^{l-1}}\\
& \lesssim \|\alpha\|_{C^{l+1}} + \|\alpha\|_{C^{0}} + \|\alpha_{1}\|_{C^{0}} + \|a_{1}\|_{C^{l}} + \|a_{2}f\|_{H^{l-1}} + \|\widetilde{A}_{2}f\|_{H^{l-1}} + \|\widetilde{R}_{2}f\|_{H^{l-1}}\\
& \lesssim \|\alpha\|_{C^{l+1}} +  \|\alpha\|_{C^{0}} + \|\alpha_{1}\|_{C^{0}} + \|a_{1}\|_{C^{l}} + \|a_{2}\|_{C^{l-1}} + \|\widetilde{R}_{2}f\|_{H^{l-1}}\\
& \vdots\\
& \lesssim \|\widetilde{R}_{l+1}\|_{L^{2}} + \|\alpha\|_{C^{l+1}} + \sum_{i=1}^{l+1}\|\alpha_{i-1}\|_{C^{0}} + \|a_{i}\|_{C^{l+1-i}}\\
& \lesssim \|\alpha_{l+1}\|_{C^{0}} + \|\alpha\|_{C^{l+1}} + \sum_{i=1}^{l+1}\|\alpha\|_{C^{l+1}}\\
& \lesssim \|\alpha\|_{C^{l+1}}.
\end{align*}
Thus in the simplified case where $\alpha$ only depends on $x$ and $\theta$ and $f$ is independent of $\theta$, we have
\begin{equation}
\|A\|_{H^{l}(\Omega) \to H^{l+1}(\rn{n})} \leq C \|\alpha\|_{C^{l+1}}.
\end{equation}

To extend to $\alpha = \alpha_{0}$ depending also on $y$ and $r$, we use a first order Taylor expansion in $y$ and $r$ centered at $y=x$ and $r = 0$, similarly to in (\cite{inversesource}, Prop. 1), to get
\begin{align*}
\alpha_{0}(x,y,r,\theta) & = \alpha_{0}(x,x,0,\theta) + \sum_{|\beta| + |\gamma| = 1}r^{|\beta|}(y-x)^{\gamma}\int_{0}^{1}\partial_{r}^{\beta}\partial_{y}^{\gamma}\alpha_{0}(x,x+t(y-x),tr, \theta)\,dt\\
& = \alpha_{0}(x,x,0,\theta) + \sum_{|\beta| + |\gamma| = 1}r^{|\beta|}(-r\theta)^{\gamma}\int_{0}^{1}\partial_{r}^{\beta}\partial_{y}^{\gamma}\alpha_{0}(x,x+t(y-x),tr, \theta)\,dt\\
& = \alpha_{0}(x,x,0,\theta) + r\sum_{|\beta| + |\gamma| = 1}(-1)^{|\gamma|}\theta^{\gamma}\int_{0}^{1}\partial_{r}^{\beta}\partial_{y}^{\gamma}\alpha_{0}(x,x+t(y-x),tr, \theta)\,dt.
\end{align*}
We can then write
\begin{equation}
\alpha_{0}(x,y,r,\theta) = \alpha_{0}(x,x,0,\theta) + r \gamma_{1}(x,y,r,\theta) \label{prop1eq2}
\end{equation}
where $\gamma_{1} \in C^{2l+1}$. After dividing by $r^{n-1}$, the first term in (\ref{prop1eq2}) maps $H^{l}$ to $H^{l+1}$ by the previous argument. The second term corresponds to an integral operator with kernel $\gamma_{1}(x,y,r,\theta)r^{-n+2}$. If we differentiate this with respect to $x$, we get a weakly singular integral operator with kernel $\alpha_{1}(x,y,r,\theta)r^{-n+1}$ where $\alpha_{1} \in  C^{2l}$. Now repeat as before, writing
\begin{equation}
\alpha_{1}(x,y,r,\theta) = \alpha_{1}(x,x,0,\theta) + r \gamma_{2}(x,y,r,\theta) \label{prop1eq3}
\end{equation}
where $\gamma_{2} \in C^{2l-1}$. The first term $\alpha_{1}(x,x,0,\theta)$ corresponds to a bounded operator $A_{1}:H^{l-1}(\Omega) \to H^{l}(\rn{n})$. Moreover, $\gamma_{2}(x,y,r,\theta)r^{-n+2}$ can be differentiated with respect to $x$ to obtain a weakly singular integral operator with kernel $\alpha_{2}(x,y,r,\theta)r^{-n+1}$ where $\alpha_{2} \in C^{2l-2}$.

After repeating this process a total of $l$ times, we have a remainder term that is a weakly singular integral operator with kernel $\alpha_{l}(x,y,r,\theta)r^{-n+1}$ where $\alpha_{l} \in C^{2}$. Write
\begin{equation*}
\alpha_{l}(x,y,r,\theta) = \alpha_{l}(x,x,0,\theta) + r\gamma_{l+1}(x,y,r,\theta)
\end{equation*}
with $\gamma_{l+1} \in  C^{1}$. Then $\gamma_{l+1}$ corresponds to the operator $\gamma_{l+1}(x,y,r,\theta)r^{-n+2}$, which we can differentiate with respect to $x$ to obtain a weakly singular operator that is bounded on $L^{2}$ with a bound not exceeding $\|\gamma_{l+1}\|_{C^{1}}$ by using the estimates in (\ref{zygmund}) and applying the Calder\'on Zygmund theorem. Since each weakly singular integral operator with kernel $\alpha_{j}(x,x,0,\theta)r^{-n+1}$ is a bounded map from $H^{l-j}(\Omega) \to H^{l-j+1}(\rn{n})$, we combine the remainder terms together to get that $A:H^{l}(\Omega) \to H^{l+1}(\rn{n})$. 

More explicitly, let $A_{i}$ be the weakly singular integral operator with kernel $\alpha_{i}(x,x,0,\theta)r^{-n+1}$ and $R_{i}$ the integral operator with kernel $\gamma_{i+1}(x,y,r,\theta)r^{-n+2}$. In particular, $A_{i} = \partial_{x_{j_{i}}}R_{i-1}$. We will also need the straightforward estimates
\begin{equation}
\|\alpha_{i}\|_{C^{l-i+1}} \lesssim \|\alpha\|_{C^{l+i+1}}, \quad \|\gamma_{i}\|_{C^{m}} \lesssim \|\alpha\|_{C^{m+2i-1}}.
\end{equation}
For $\|f\|_{H^{l}} = 1$ we have
\begin{align*}
\|Af\|_{H^{l+1}} & \leq \| A_{0}f\|_{H^{l+1}} + \| R_{0}f\|_{H^{l+1}}\\
& \leq \|\alpha_{0}\|_{C^{l+1}} + \|R_{0}f\|_{L^{2}} + \|\partial_{x_{j_{1}}}R_{0}f\|_{H^{l}}\\
& \lesssim \|\alpha_{0}\|_{C^{l+1}} + \|\gamma_{1}\|_{C^{0}} + \|A_{1}f\|_{H^{l}} + \|R_{1}f\|_{H^{l}}\\
& \lesssim \|\alpha_{0}\|_{C^{l+1}} +  \|\gamma_{1}\|_{C^{0}} + \|\alpha_{1}\|_{C^{l}} + \|R_{1}f\|_{L^{2}} + \|\partial_{x_{j_{2}}}R_{1}f\|_{H^{l-1}}\\
& \lesssim \|\alpha_{0}\|_{C^{l+1}} +  \|\alpha_{1}\|_{C^{l}}  + \|\gamma_{1}\|_{C^{0}} + \|\gamma_{2}\|_{C^{0}} + \|A_{2}f\|_{H^{l-1}} + \|R_{2}f\|_{H^{l-1}}\\
& \vdots\\
& \lesssim \|A_{l}f\|_{H^{1}} + \|R_{l}f\|_{H^{1}} + \sum_{i=0}^{l-1} \|\alpha_{i}\|_{C^{l+1-i}} + \|\gamma_{i+1}\|_{C^{0}}\\
& \lesssim \|\alpha_{l}\|_{C^{1}} + \|R_{l}f\|_{L^2} + \|\partial_{x_{j_{l+1}}}R_{l}f\|_{L^{2}} +  \sum_{i=0}^{l-1} \|\alpha_{i}\|_{C^{l+1-i}} + \|\gamma_{i+1}\|_{C^{0}}\\
& \lesssim \|\alpha\|_{C^{2l+1}} + \|\gamma_{l+1}\|_{C^{0}} + \|\gamma_{l+1}\|_{C^{1}} + \sum_{i=0}^{l-1} \|\alpha\|_{C^{l+1+i}} + \|\alpha\|_{C^{2i+1}}\\
& \lesssim \|\alpha\|_{C^{2l+1}} + \|\alpha\|_{C^{2l+2}} +  \sum_{i=0}^{l-1} \|\alpha\|_{C^{l+1+i}} + \|\alpha\|_{C^{2i+1}}\\
& \lesssim \|\alpha\|_{C^{2l+2}}.
\end{align*}
This proves (a).

Now consider if $\alpha(x,y,r,\theta) = \alpha'(x,y,r,\theta)\phi(\theta)$. Then
\begin{equation}
(1-n)\theta_{j}\alpha + \partial_{\theta_{j}}\alpha = (1-n)\theta_{j}\alpha' \phi + \alpha' \partial_{\theta_{j}}\phi + \phi \partial_{\theta_{j}}\alpha'.
\end{equation}
In short, for each term in the decomposition of $A$ by differentiation, $\phi$ is differentiated exactly once. Therefore
\begin{equation}
\|A\|_{H^{l}(\Omega) \to H^{l+1}(\rn{n})} \leq C \|\alpha'\|_{C^{2l+2}}\|\phi|_{H^{1}(\mathbb{S}^{n-1})},
\end{equation}
which proves (b).

For $f \in \mathcal{H}_{l}(\Omega \times \mathbb{S}^{n-1})$ depending on $\theta$ as well, we expand $f$ as a series
\begin{equation*}
f(x,\theta) = \sum_{m=0}^{\infty}\sum_{k=1}^{k_{m,n}}a_{m}^{(k)}(x)Y_{m,n}^{(k)}(\theta),
\end{equation*}
Then
\begin{align*}
[Af](x) & = \int \frac{\alpha(x,y,r,\theta)}{r^{n-1}}f(y,\theta)\,dy\\
& =  \int \frac{\alpha(x,y,r,\theta)}{r^{n-1}}\sum_{m=0}^{\infty}\sum_{k=1}^{k_{m,n}}a_{m}^{(k)}(y)Y_{m,n}^{(k)}(\theta)\,dy\\
& = \sum_{m=0}^{\infty}\sum_{k=1}^{k_{m,n}} \int \frac{\alpha(x,y,r,\theta)Y_{m,n}^{(k)}(\theta)}{r^{n-1}}a_{m}^{(k)}(y)\,dy.
\end{align*}
Hence
\begin{align*}
\|Af\|_{H^{l+1}(\rn{n} \times \mathbb{S}^{n-1})} & \leq \sum_{m=0}^{\infty}\sum_{k=1}^{k_{m,n}} \left\|  \int \frac{\alpha(x,y,r,\theta)Y_{m,n}^{(k)}(\theta)}{r^{n-1}}a_{m}^{(k)}(y)\,dy \right\|_{H^{l+1}(\rn{n} \times \mathbb{S}^{n-1})}\\
& \leq  \sum_{m=0}^{\infty}\sum_{k=1}^{k_{m,n}} \|A_{m,n}^{(k)}\|_{H^{l} \to H^{l+1}}\|a_{m}^{(k)}\|_{H^{l}(\Omega)}\\
& \leq C\|\alpha\|_{C^{2l+2}} \sum_{m=0}^{\infty}\sum_{k=1}^{k_{m,n}} \|Y_{m,n}^{(k)}\|_{H^{1}(\mathbb{S}^{n-1})}\|a_{m}^{(k)}\|_{H^{l}(\Omega)}\\
& \leq C\|\alpha\|_{C^{2l+2}}\|f\|_{\mathcal{H}_{l}(\Omega)}.
\end{align*}
Here the operator $A_{m,n}^{(k)}$ is given by $[A_{m,n}^{(k)}g](x) = \int \frac{\alpha(x,y,r,\theta)Y_{m,n}^{(k)}(\theta)}{r^{n-1}}g(y)\,dy$. This proves (c).

Finally, if $\alpha = \alpha(x,y,r,\theta,\eta)$ is $C^{\infty}$ with compact support, we can expand it as a series
\begin{equation}
\alpha(x,y,r,\theta,\eta) = \sum_{m=0}^{\infty}\sum_{k=1}^{k_{m,n}}b_{m}^{(k)}(x,y,r,\theta)Y_{m,n}^{(k)}(\eta).
\end{equation}
Note that $\alpha \in \mathcal{C}_{j}(\rn{n} \times \rn{n} \times \mathbb{R} \times \mathbb{S}^{n-1} \times \mathbb{S}^{n-1})$ for all $j \geq 0$. Then for $f \in \mathcal{H}_{l}(\Omega \times \mathbb{S}^{n-1})$, we have
\begin{align*}
& \qquad \|Af\|_{\mathcal{H}_{l+1}(\rn{n} \times \mathbb{S}^{n-1})}\\
& = \sum_{m_{1}=0}^{\infty}\sum_{k_{1}=1}^{k_{m_{1},n}}\|Y_{m_{1},n}^{(k_{1})}\|_{H^{1}(\mathbb{S}^{n-1})}\left\| \int \frac{b_{m_{1}}^{(k_{1})}(x,y,r,\theta)}{r^{n-1}}f(y,\theta)\,dy \right\|_{H^{l+1}(\rn{n} \times \mathbb{S}^{n-1})}\\
& = \sum_{m_{1}=0}^{\infty}\sum_{k_{1}=1}^{k_{m_{1},n}}\|Y_{m_{1},n}^{(k_{1})}\|_{H^{1}(\mathbb{S}^{n-1})} \left\| \sum_{m_{2}=0}^{\infty}\sum_{k_{2}=1}^{k_{m_{2},n}}\int \frac{b_{m_{1}}^{(k_{1})}(x,y,r,\theta)Y_{m_{2},n}^{(k_{2})}(\theta)}{r^{n-1}}a_{m_{2}}^{k_{2}}(y)\,dy \right\|_{H^{l+1}(\rn{n} \times \mathbb{S}^{n-1})}\\
& \lesssim  \sum_{m_{1}=0}^{\infty}\sum_{k_{1}=1}^{k_{m_{1},n}}\sum_{m_{2}=0}^{\infty}\sum_{k_{2}=1}^{k_{m_{2},n}}\|Y_{m_{1},n}^{(k_{1})}\|_{H^{1}(\mathbb{S}^{n-1})}\| b_{m_{1}}^{(k_{1})}\|_{C^{2l+2}}\|Y_{m_{2},n}^{(k_{2})}\|_{H^{1}(\mathbb{S}^{n-1})}\|a_{m_{2}}^{k_{2}}\|_{H^{l}(\Omega)}\\
& = \|\alpha\|_{\mathcal{C}_{2l+2}}\|f\|_{\mathcal{H}_{l}(\Omega \times \mathbb{S}^{n-1})}.
\end{align*}
\end{proof}

\end{document}